\begin{document}
\newenvironment {proof}{{\noindent\bf Proof.}}{\hfill $\Box$ \medskip}

\newtheorem{theorem}{Theorem}[section]
\newtheorem{lemma}[theorem]{Lemma}
\newtheorem{condition}[theorem]{Condition}
\newtheorem{proposition}[theorem]{Proposition}
\newtheorem{remark}[theorem]{Remark}
\newtheorem{definition}[theorem]{Definition}
\newtheorem{hypothesis}[theorem]{Hypothesis}
\newtheorem{corollary}[theorem]{Corollary}
\newtheorem{example}[theorem]{Example}
\newtheorem{descript}[theorem]{Description}
\newtheorem{assumption}[theorem]{Assumption}

\newcommand{\ba}{\begin{align}}
\newcommand{\ea}{\end{align}}

\def\P{\mathbb{P}}
\def\R{\mathbb{R}}
\def\E{\mathbb{E}}
\def\N{\mathbb{N}}
\def\Z{\mathbb{Z}}

\renewcommand {\theequation}{\arabic{section}.\arabic{equation}}
\def \non{{\nonumber}}
\def \hat{\widehat}
\def \tilde{\widetilde}
\def \bar{\overline}

\def\ind{{\mathchoice {\rm 1\mskip-4mu l} {\rm 1\mskip-4mu l}
{\rm 1\mskip-4.5mu l} {\rm 1\mskip-5mu l}}}

\title{\Large\ {\bf Unbiased estimation of parameter sensitivities for stochastic chemical reaction networks}}

\author{Ankit Gupta and Mustafa Khammash \\
Department of Biosystems Science and Engineering \\ ETH Zurich \\  Mattenstrasse 26 \\ 4058 Basel, Switzerland. 
}
\date{\today}
\maketitle
\begin{abstract}
Estimation of parameter sensitivities for stochastic chemical reaction networks is an important and challenging problem. Sensitivity values are important in the analysis, modeling and design of chemical networks. They help in understanding the robustness properties of the system and also in identifying the key reactions for a given outcome. In a discrete setting, most of the methods that exist in the literature for the estimation of parameter sensitivities rely on Monte Carlo simulations along with finite difference computations. However these methods introduce a bias in the sensitivity estimate and in most cases the size or direction of the bias remains unknown, potentially damaging the accuracy of the analysis.
In this paper, we use the random time change representation of Kurtz to derive an exact formula for parameter sensitivity. This formula allows us to construct an unbiased estimator for parameter sensitivity, which can be efficiently evaluated using a suitably devised Monte Carlo scheme. The existing literature contains only one method to produce such an unbiased estimator. This method was proposed by Plyasunov and Arkin and it is based on the Girsanov measure transformation. By taking a couple of examples we compare our method to this existing method. Our results indicate that our method can be much faster than the existing method while computing sensitivity with respect to a reaction rate constant which is small in magnitude. This rate constant could correspond to a reaction which is \emph{slow} in the reference time-scale of the system.
Since many biological systems have such slow reactions, our method can be a useful tool for sensitivity analysis.   
\end{abstract}

\noindent {\bf Keywords:}  parameter sensitivity, random time change, Gillespie, Markov process, chemical reaction network, Girsanov, coupling.\\ 
\medskip

\setcounter{equation}{0}

\section{Introduction}
Stochastic models for chemical reactions networks have become increasingly popular in the past few years. Their appeal comes from the fact that molecules in a chemical system always display randomness in their dynamics. This randomness cannot be ignored when the molecules are present in low numbers, as it can have a significant effect on the overall properties of the dynamics. Therefore one needs to consider stochastic models to account for this randomness. Recently many problems in Biology \cite{Noise,ArkinAdams,Elowitz} and Chemistry \cite{Haseltine} have been studied using such models. 
Typically, a chemical reaction network depends on various kinetic parameters whose values may be uncertain or difficult to measure with high precision. In such cases, one would like to determine how sensitive a given output of the system is to small changes in the parameter values. If an outcome is highly sensitive, then greater time and effort may be invested in determining that parameter accurately. Sensitivity analysis can also be used in fine-tuning a certain output (see \cite{Feng}) or understanding the robustness properties of a biological system (see \cite{Stelling}). 

Consider a system consisting of $d$ chemical species. We assume that the system is well-stirred and hence its state at any time can be described by a vector in 
$\mathbb{N}_{0}^d$ whose $i$-th component is the non-negative integer corresponding to the number of molecules of the $i$-th species.  These chemical species interact through $K$ predefined  reaction channels and every time the $j$-th reaction fires, the state of the system is displaced by the $d$-dimensional \emph{stoichiometric vector} $\zeta_j \in \Z^d$.  Given that the state of the system is $x$, the rate at which the $j$-th reaction fires is given by the \emph{propensity function} $\lambda_j(x)$.
In a stochastic setting, such a chemical reaction network can be modeled by a continuous time Markov process over $\mathbb{N}^d_0$. See \cite{DASurvey} for a survey of such models. It can be shown that the probability mass function of this Markov chain evolves according to the chemical master equation (CME). Solving the CME analytically or even numerically is usually quite difficult except for very simple systems (see \cite{FSP}). However it is easy to generate the sample paths of this process using Monte Carlo simulations (see \cite{GP,NR}).

Now suppose that the propensity functions of the reaction network depend on a scalar parameter $\theta$. We denote these propensity functions by $\lambda_j(x,\theta)$ for $j =1,\dots,K$ and the resulting Markov process by $
\{ X_\theta(t) : t \geq 0 \}$. Given a function $f : \N_0^d \to \R$ and an observation time $T \geq 0$, our goal is to compute the quantity
\begin{align}
\label{sensitivity1}
S_{\theta}(f,T) = \frac{\partial}{  \partial \theta} \E \left( f(X_{\theta}(T)) \right).
\end{align}• 
Here $f(X_{\theta}(T))$ is the output of interest and $S_{\theta}(f,T)$ evaluates how the expected value of this output changes with infinitesimal changes in the parameter $\theta$. Since simulating the paths of this Markov process is easy, many methods use a finite difference scheme such as 
\begin{align}
\label{fdmscheme}
S_{\theta,h }(f,T) = \frac{1}{h}\E \left(  f(X_{\theta + h}(T)) - f(X_{\theta}(T))  \right),
\end{align}
for a small $h$ to estimate $S_{\theta}(f,T)$. The processes $X_\theta$ and $X_{\theta+h}$ can either be simulated independently (see \cite{IRN}) or they can be coupled in an intelligent way to reduce the variance of the associated estimator (see \cite{KSR1,DA}). Finite difference schemes are easy to implement but they have a few drawbacks. They introduce a bias in the sensitivity value and one cannot recover the exact value $S_\theta(f,T)$, even by taking infinitely many samples. In most cases, the magnitude and sign of this bias is unknown which may cause problems in certain applications. One can reduce the size of the bias by picking a small $h$. However for small values of $h$, the variance of the sensitivity estimator can blow up (see the discussion in \cite{KSR1}), making it necessary to generate extremely large number of samples to obtain the sensitivity estimate within a tight confidence interval. Hence there is a clear trade-off between the bias and the computational cost which implies that such schemes are efficient as long as one is willing to tolerate some amount of (unknown) bias. Another source of difficulty with finite difference schemes is that to estimate the sensitivities with respect to several parameters, one needs to estimate it with respect to each parameter separately. This can be quite cumbersome for large networks. One can avoid this problem by using another approach which estimates the sensitivity value using pathwise differentiation (see \cite{KSR2}). In this method, the problem of estimating the sensitivity is \emph{regularized} in the sense that  
\[ \frac{\partial}{  \partial \theta}  \E  \left(  \int_{T -w}^{T+w} \frac{1}{2w} f(X_{\theta}(t)) d t \right),\]
is computed as a proxy for $S_{\theta}(f,T)$. Here $w$ is the half-width of the \emph{regularizing window}. With this scheme, the sensitivities with respect to multiple parameters can be estimated simultaneously. However it also produces a biased estimate and its performance depends crucially on the parameter $w$ which has to be determined carefully to achieve the desired level of accuracy.

If one is interested in finding an unbiased estimate for the parameter sensitivity, then for this purpose there is only one approach in the existing literature. This approach was proposed by Plyasunov and Arkin in \cite{Gir} and it relies on the Girsanov measure transformation. Henceforth we shall call this method as the Girsanov method. It has the advantage of being computationally easy to implement and unlike the other methods mentioned above, one does not need to tune other parameters (like $h$ or $w$) to achieve the required statistical accuracy. Moreover the sensitivities with respect to several parameters can be estimated together in a single run. Since the method is unbiased we can get better and better estimates simply by taking larger and larger samples. However in many situations the estimator produced by this method has a large variance. One such situation that commonly arises is when the sensitive parameter $\theta$ is a reaction rate constant which is \emph{small} in size (see Example \ref{example1}).  
In this case, the variance of the sensitivity estimator is large, making it difficult to estimate the sensitivity efficiently. We shall discuss this issue in greater detail in the next section. 

  
The above discussion shows that it would be desirable to have a \emph{new} method, which provides an unbiased estimate for the parameter sensitivity and also performs better than the Girsanov method in certain situations. The goal of this paper is to present such a method.  
Using the random time change representation of Kurtz (see Chapter 7 in \cite{EK}) we first derive an explicit expression for $S_{\theta}(f,T)$ (defined by \eqref{sensitivity1}). This expression is derived by exploiting the coupling that was introduced in \cite{DA} to construct an efficient estimator for $S_{\theta,h }(f,T) $ (see \eqref{fdmscheme}).
Based on this expression for $S_{\theta}(f,T)$ we construct an unbiased estimator for the parameter sensitivity. 
Our method has two main advantages over the Girsanov method. It requires much fewer number of samples to produce the desired estimate and its performance does not deteriorate as the magnitude of the sensitive parameter gets smaller.
In fact, our method can also be used to compute the sensitivity with respect to a rate constant that is set to $0$. This can tell us how sensitive a given output is to the presence or absence of a certain reaction channel and this information can help us weed out the redundant reactions in a reaction network. Such a computation is not possible with the Girsanov method. Our method also has its disadvantages in comparison to the Girsanov method. It is harder to implement, requires more memory to run and the computational cost of generating each sample from the required distribution is high. Our results indicate that our method can be considerably more efficient than the Girsanov method, while computing the sensitivity with respect to a reaction rate constant which is \emph{small} in size. This would correspond to the reaction which is \emph{slow} in the reference time-scale of the system (determined by the observation time period $[0,T]$). As mentioned in the preceding paragraph, this is one of the situations where the Girsanov method performs poorly. Since many biochemical networks have wide variations in the rate constants of the constituent reactions, our method could be a useful tool for sensitivity analysis.

In our opinion, the main contribution of this paper is to present a new formula for the parameter sensitivity. This formula expresses the sensitivity as an expectation of a certain random variable which depends on the path of the underlying Markov process. However we shall see in the next section, that evaluating this random variable requires us to compute several expectations of functions of our Markov process at various times and various initial states. In principle we can compute such expectations as and when they are needed by simulating many \emph{new} paths of the Markov process, but this approach can render the problem computationally intractable for most systems of interest. To circumvent this issue we devise a Monte Carlo scheme in which all the required expectations are estimated using a fixed number of \emph{auxiliary} paths. This approach is outlined in Section 3. A more detailed description along with full implementation details is given in \cite{report}.
Finally we would like to mention that our sensitivity formula can be used for any continuous time Markov process over a discrete lattice. Other than stochastic reaction networks, such processes arise naturally in queuing theory and population modeling.

This paper is organized as follows. In Section 2 we present our main result which gives a formula for the parameter sensitivity. Through a couple of examples we motivate why the estimator based on our formula can perform better than the Girsanov method in certain situations.
In Section 3, we formally describe our method for estimating the parameter sensitivity and compare its performance with the Girsanov method. We also remark how our method fits into the grand scheme of sensitivity analysis for stochastic reaction networks. In Section 4 we prove the result mentioned in Section 2. Finally, in Section 5 we conclude and present directions for future research.

\section{The Main Result}

Recall the description of the chemical reaction network from the previous section. We suppose that the propensity functions depend on a scalar parameter $\theta$ and hence denote these functions 
by $\lambda_j(x,\theta)$ for $j =1,\dots,K$. In a stochastic setting, the dynamics can be modeled by a continuous time Markov process whose generator\footnote{The generator of a Markov process is an operator which specifies the rate of change of the distribution of the process. See Chapter 4 in \cite{EK} for more details.} is given by
 \begin{align}
\label{genctmc2}
\mathbb{A}_{\theta}f(x) = \sum_{k = 1}^K \lambda_k(x,\theta) \Delta_{\zeta_k} f(x),
\end{align} 
where $f$ is a bounded function from $\N^d_0$ to $\R$. Let $\{ X_\theta(t) : t \geq 0\}$ be a Markov process with this generator. The random time change representation of Kurtz (see Chapter 7 in \cite{EK}) allows us to write 
\begin{align}
\label{tcrep2}
X_{\theta}(t) = X_{\theta}(0) + \sum_{k = 1}^{K} Y_k \left( \int_{0}^{t} \lambda_k(X_{\theta}(s), \theta) ds\right) \zeta_k,
\end{align}
where $\{Y_k : k = 1,\dots,K \}$ is a family of independent unit rate Poisson processes. We define the function $\lambda_0$ as the sum of propensities
\begin{align}
\label{defnlambda0}
\lambda_0(x,\theta) = \sum_{i = 1}^K \lambda_i(x,\theta).
\end{align}
Throughout the paper $\| \cdot\|$ will refer to the standard $1$-norm on $\R^n$ given by $\| x\| = \sum_{i=1}^{n}|x_i|$. Moreover the vector $\bar{1}$ will denote the vector of all ones in $\R^n$. 
We now state a condition on real-valued functions on our state space. 
\begin{condition}
\label{cond:bddness} A function $f : \mathbb{N}^d_0 \to \mathbb{R}$ satisfies this condition if there exist constants $C ,r >0$ such that 
\begin{align*}
|f(x)| \leq C(1 + \|x\|^r) \textrm{ for all } x \in \N_0^d.
\end{align*}
\end{condition}
We now state a condition on the propensity functions.
\begin{condition}
\label{cond:propensityfunctions}
We say that the propensity functions $\lambda_1,\dots,\lambda_K$ satisfy this condition at the parameter value $\theta$, if the following is true.
\begin{itemize}
\item[(A)] For any fixed $x \in \N_0^d$, each $\lambda_k(x,\cdot)$ is twice-continuously differentiable in a neighborhood of $\theta$.
\item[(B)] For each $k$, the functions $\lambda_k( \cdot,\theta)$ and $\partial \lambda_k (\cdot,\theta)/ \partial \theta$ satisfy Condition \ref{cond:bddness}. Moreover there exists an $\epsilon > 0$ such that
$\sup_{ \xi\in (\theta - \epsilon , \theta + \epsilon)} |\partial^2 \lambda_k (\cdot,\xi)/ \partial \theta^2|$ also satisfies Condition \ref{cond:bddness}.
\item[(C)] For any $k$ and $x \in \N^d_0$, if $\lambda_k(x,\theta) > 0$ then the vector $(x + \zeta_k)$ has all non-negative components. 
\item[(D)] Let $P$ be the set of indices of those reactions which have a net positive affect on the total population. That is
\begin{align}
\label{setofpositivereactions}
P = \left\{ k = 1,2,\dots,K : \langle \bar{1}, \zeta_k \rangle > 0\right\}.
\end{align}
Then there exists a $C_\lambda > 0$ such that for all $x \in \mathbb{N}^d_0$ we have 
\begin{align*}
\sum_{ k = 1, k \in P }^K \lambda_k(x,\theta)  \leq C_\lambda (1 + \| x\|). 
\end{align*}
\end{itemize}
\end{condition}
Parts (A) and (B) are technical requirements for our main result. Part (C) prevents the Markov process from leaving the state space $\N^d_0$.  
Part (D) is needed to ensure that all the moments of the Markov process can be bounded uniformly in time (see Lemma \ref{lemma1}). Informally, this condition says that all the reactions that add molecules into the system have orders $0$ or $1$. If there is a compact set $S$ such that for each $k$, $\lambda_k(x,\theta) = 0$ for all $x \notin S$, then part (D) is trivially satisfied.

If $\{ X_{\theta}(t) : t \geq 0 \}$ is a Markov process with generator $\mathbb{A}_{\theta}$ then let
\begin{align}
\label{defpsixtheta}
\Psi_{\theta}(x,f,t) = \mathbb{E} \left(  f(X_\theta(t)) \middle\vert X_{\theta}(0) = x\right).
\end{align}
Also for each $k =1 ,\dots, K$ define 
\begin{align}
\label{defrktheta}
R_{\theta}(x,f,t,k) =  \int_{0}^{t} \left(  \Psi_{\theta}(x + \zeta_k,f,s)  -   \Psi_{\theta}(x,f,s)   - \Delta_{\zeta_k} f(x) \right) e^{ - \lambda_0(x,\theta) (t-s)} ds.
\end{align}
We are now ready to state our main result. It gives an explicit expression for $S_\theta(f,T)$ which is defined by \eqref{sensitivity1}.
\begin{theorem}
\label{thm:sens}
Pick a $x_0 \in \N^d_0$ and a function $f :  \N^d_0 \to \R$ satisfying Condition \ref{cond:bddness}. Assume that the propensity functions $\lambda_1,\dots,\lambda_K$ satisfy Condition \ref{cond:propensityfunctions} at $\theta$. Let $\{ X_\theta(t) : t \geq 0\}$ be the $\N^d_0$-valued Markov process with generator $\mathbb{A}_\theta$ starting at $x_0$ and let 
$\sigma_i$ be its $i$-th jump time\footnote{We define $\sigma_0 = 0$ for convenience.} for $i =0,1,2,\dots$.  
Then the sensitivity value $S_{\theta}(f,T)$ is well-defined and is equal to
\begin{align*}
S_{\theta}(f,T)    = \frac{\partial}{\partial \theta} \Psi_\theta( x_0,f,T) = \E \left( s_\theta(f,T) \right)   
\end{align*}
where
\begin{align}
\label{sensformula1}
s_\theta(f,T) =  \sum_{k = 1}^K &\left( \int_{0}^T \frac{ \partial \lambda_k ( X_\theta (t) , \theta ) }{ \partial \theta   } \Delta_{\zeta_k} f(X_\theta(t)) dt  \right. \\ & \left. 
+ \sum_{  i = 0 : \sigma_i < T   }^{\infty}  \frac{  \partial  \lambda_k ( X_\theta(\sigma_{i}) ,\theta ) }{ \partial \theta}  R_{\theta}( X_\theta( \sigma_i) ,f, T -\sigma_i ,k) \right). \notag
\end{align}
\end{theorem}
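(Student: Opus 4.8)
The plan is to obtain the sensitivity in two stages: first derive a variation-of-constants (Duhamel) representation that writes $S_\theta(f,T)$ as a single time integral of an expectation, and then reorganize that integral along the inter-jump intervals of $X_\theta$ to recover the pathwise functional $s_\theta(f,T)$. Write $\Psi_\theta(x,f,t)=(P^\theta_t f)(x)$ for the semigroup generated by $\mathbb{A}_\theta$. To compare the parameter values $\theta$ and $\theta+h$ I would use the telescoping identity $\frac{d}{ds}\big(P^\theta_{T-s}P^{\theta+h}_s\big)=P^\theta_{T-s}(\mathbb{A}_{\theta+h}-\mathbb{A}_\theta)P^{\theta+h}_s$ and integrate in $s$ to get $P^{\theta+h}_T f-P^\theta_T f=\int_0^T P^\theta_{T-s}(\mathbb{A}_{\theta+h}-\mathbb{A}_\theta)P^{\theta+h}_s f\,ds$. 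Dividing by $h$ and letting $h\to 0$, using Condition \ref{cond:propensityfunctions}(A) so that $h^{-1}(\mathbb{A}_{\theta+h}-\mathbb{A}_\theta)g(x)\to\sum_k \tfrac{\partial\lambda_k(x,\theta)}{\partial\theta}\Delta_{\zeta_k}g(x)$, writing the outer semigroup as an expectation over $X_\theta$ and substituting $t=T-s$, I obtain
\[
S_\theta(f,T)=\int_0^T \E\!\left[\sum_{k=1}^K \frac{\partial\lambda_k(X_\theta(t),\theta)}{\partial\theta}\big(\Psi_\theta(X_\theta(t)+\zeta_k,f,T-t)-\Psi_\theta(X_\theta(t),f,T-t)\big)\right]dt.
\]

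For the second stage I would split the increment $\Psi_\theta(x+\zeta_k,f,T-t)-\Psi_\theta(x,f,T-t)$ into $\Delta_{\zeta_k}f(x)$ plus a remainder. The $\Delta_{\zeta_k}f(x)$ part, after exchanging the expectation with the $t$-integral, is precisely the first line of \eqref{sensformula1}. For the remainder I would decompose $[0,T]=\bigcup_i [\sigma_i,\sigma_{i+1}\wedge T)$, noting that $X_\theta$ is constant and equal to $X_\theta(\sigma_i)$ on each interval. Conditioning on $\mathcal{F}_{\sigma_i}$, the holding time $W=\sigma_{i+1}-\sigma_i$ is exponential with rate $\lambda_0(X_\theta(\sigma_i),\theta)$, and a Fubini computation gives
\[
\E\!\left[\int_{\sigma_i}^{(\sigma_i+W)\wedge T}\!\psi(t)\,dt \,\middle|\, \mathcal{F}_{\sigma_i}\right]=\int_{\sigma_i}^{T}\psi(t)\,e^{-\lambda_0(X_\theta(\sigma_i),\theta)(t-\sigma_i)}\,dt,
\]
where $\psi$ denotes the remainder integrand evaluated at the frozen state $X_\theta(\sigma_i)$. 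Substituting $s=T-t$ turns the conditional contribution of interval $i$ into exactly $\sum_k \tfrac{\partial\lambda_k(X_\theta(\sigma_i),\theta)}{\partial\theta}R_\theta(X_\theta(\sigma_i),f,T-\sigma_i,k)$, by the definition \eqref{defrktheta} of $R_\theta$; summing over $i$ via the tower property yields the second line of \eqref{sensformula1}. I would note that this argument also covers absorbing states ($\lambda_0(x,\theta)=0$), where $\partial\lambda_k/\partial\theta$ may be nonzero, which is why the formula applies when a rate constant is set to $0$.

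The main obstacle is the rigorous justification of the $h\to 0$ passage in the first stage, namely interchanging the limit with the $ds$-integral and the expectation when $f$ and the $\lambda_k$ only satisfy the polynomial growth Condition \ref{cond:bddness}. This is where the uniform-in-time moment bounds (Lemma \ref{lemma1}, guaranteed by Condition \ref{cond:propensityfunctions}(D)) and the local second-derivative bound of Condition \ref{cond:propensityfunctions}(B) do the work, controlling $P^{\theta+h}_s f$ and its $\theta$-derivative uniformly for small $h$; the coupling of $X_\theta$ and $X_{\theta+h}$ from \cite{DA} is the natural device for making this limit precise at the level of sample paths. A secondary technical point is the $L^1$-convergence and Fubini-admissibility of the infinite sum over jump times: one must bound the number of jumps in $[0,T]$ together with the growth of $R_\theta$, again through the moment estimates, so that the interchange of summation and expectation in the final step is legitimate and the series in \eqref{sensformula1} is well-defined.
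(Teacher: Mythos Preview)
Your argument is correct and takes a genuinely different route from the paper. The paper works directly with the coupled pair $(\hat X_\theta,\hat X_{\theta+h})$ from \cite{DA}: it applies Dynkin's formula to each coordinate, Taylor-expands $\lambda_k(\cdot,\theta+h)$, and is left with the remainder $\rho(\theta)$ in \eqref{defn:rhotheta}. That remainder is analyzed by decomposing according to the first separation time $\tau^h$ of the two coupled processes---specifically, according to which inter-jump interval $[\sigma^h_i,\sigma^h_{i+1})$ contains $\tau^h$ and which reaction $k$ caused the split. The exponential weight in $R_\theta$ emerges because, conditional on $\tau^h>\sigma^h_i$, the residual $\tau^h-\sigma^h_i$ is exponential with rate $r(x,\theta,h)\to\lambda_0(x,\theta)$. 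By contrast, you never need the coupled pair explicitly: your Duhamel identity $P^{\theta+h}_T-P^\theta_T=\int_0^T P^\theta_{T-s}(\mathbb A_{\theta+h}-\mathbb A_\theta)P^{\theta+h}_s\,ds$ delivers the integral representation in one stroke, and the exponential weight in $R_\theta$ then appears from the holding-time distribution of the \emph{single} process $X_\theta$ via the Fubini identity you wrote down. Your route is analytically cleaner and makes the structure of the formula more transparent; the paper's route is more probabilistic and, by tying $R_\theta$ to the post-separation behaviour of the coupling, explains heuristically why the resulting estimator inherits low variance from the CFD coupling of \cite{DA}. The technical hurdles you flag---dominated convergence in the $h\to 0$ step (handled via Lemma~\ref{lemma1} and Condition~\ref{cond:propensityfunctions}(B)) and integrability of the sum over jump times---are the same ones the paper faces, and your identification of the tools is accurate.
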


The proof of this theorem is given in Section 4. Note that the random variable $s_{\theta}(f,T)$ can be evaluated from the realizations of the Markov process $\{ X_\theta(t) : t \geq 0\}$, provided that we can calculate the values of the function $R_{\theta}$ that are required. In almost all the cases these values cannot be calculated explicitly and they have to be estimated during the simulation run. We shall deal with this issue in detail in the next section. For now assume that we ``know" the function $R_\theta$ and hence using independent realizations of $\{ X_\theta(t) : t \geq 0\}$ we can generate $N$ independent samples $s^{(1)}_\theta(f,T) , \dots, s^{(N)}_\theta(f,T) $ from the distribution of $s_\theta(f,T) $. Then $S_{\theta}(f,T)$ can be estimated as
\begin{align}
\label{sens_est1}
\hat{S}_\theta(f,T) = \frac{1}{N} \sum_{i = 1}^N s^{(i)}_{\theta}(f,T) 
\end{align}•
and the variance of this estimator is
\begin{align}
\label{sens_est_var}
\mathrm{Var}(\hat{S}_\theta(f,T)) = \mathrm{Var} \left( \frac{1}{N} \sum_{i = 1}^N s^{(i)}_\theta(f,T)  \right) = \frac{1}{N} \mathrm{Var}(s_\theta(f,T) ).
\end{align}•
Generally statistical quantities are estimated along with a confidence interval which indicates the accuracy of the estimate. The half-length of the $95 \%$ confidence interval for the above estimator is given by $1.96  \sqrt{\mathrm{Var}(\hat{S}_\theta(f,T)) } =  1.96  \ \sqrt{\mathrm{Var}(s_\theta(f,T)) }/ \sqrt{N} $. Since the variance of $s_\theta(f,T)$ is unknown we can substitute it by the sample variance $\hat{v}(s_\theta(f,T))$ evaluated as
\begin{align}
\label{formulaforsamplevariance}
\hat{v}(s_\theta(f,T)) =  \frac{1}{N-1} \sum_{i = 1}^N  \left(  s^{(i)}_\theta(f,T)  - \hat{S}_\theta(f,T)  \right)^2.
\end{align}
Note that the number of samples ($N$) needed for estimating $\hat{S}_\theta(f,T)$ within a certain confidence interval, is directly proportional to the variance of $s_\theta(f,T)$.

Now we briefly discuss the Girsanov method presented in \cite{Gir} and the problems associated with it. Suppose that the sensitive parameter $\theta$ appears linearly in only one propensity function $\lambda_{k_0}$ for some $k_0 \in \{1,\dots,K\}$. In this case, it is shown in \cite{Gir} that $S_{\theta}(f,T) = \E (s_\theta(f,T))$ with $s_\theta(f,T)$ given by
\begin{align}
\label{form_gir}
s_\theta(f,T) = \frac{ f(X_\theta(T)) M^{k_0}_\theta(T)  }{ \theta },
\end{align}•
where
$M^{k_0}_\theta(t) = ( N^{k_0}_\theta(t) - \int_{0}^{t} \lambda_{k_0}(X_\theta(s), \theta) ds)$ is a martingale
and $N^{k_0}_\theta(t)$ is the number of times reaction $k_0$ fired until time $t$. Observe that this formula cannot be used to determine $S_{\theta}(f,T)$ for $\theta = 0$ even though $S_{\theta}(f,T)$ (as defined by \eqref{sensitivity1}) makes perfect sense at $\theta = 0$. Moreover, as the next example illustrates, the random variable $s_\theta(f,T)$ (given by \eqref{form_gir}) can have a large variance for small values of $\theta$ making it necessary to take very large sample sizes for estimating accurately.
\begin{example}
\label{example1}
{ \rm
Consider a pure birth process in which a chemical species $\mathcal{S}$ is created at rate $\theta$
\begin{align*}
\emptyset \stackrel{\theta}{\rightarrow} \mathcal{S}.
\end{align*}
The population at time $t$ is given by $X_\theta(t) \in \N_0$. We assume that $X_{\theta}(0) = 0$. There is only reaction, with propensity function $\lambda_1(x,\theta) = \theta$ and stoichiometric vector $\zeta_1 = 1$.
For this system we can write $X_{\theta}(t) = Y(\theta t)$ where $Y$ is a unit rate Poisson process. Let $f : \N_0 \to \R$ be given by $f(x) = x$. Then for any $T \geq 0$
\begin{align}
\label{sens_example}
S_\theta(f,T) = \frac{d}{d \theta} \E( f(X_\theta(T)) ) = \frac{d}{d \theta} \E( X_{\theta}(T) )  =  \frac{d}{d \theta}  \E (Y(\theta T))  =\frac{d (\theta T) }{d \theta}   = T.
\end{align}
The last equality follows from the fact that $Y(\theta T)$ is a Poisson random variable with rate $\theta T$. 
According to the Girsanov method, $S_\theta(f,T) = \E \left( s_\theta(f,T)\right)$ where $s_\theta(f,T)$ can be evaluated using \eqref{form_gir} as
\begin{align*}
s_\theta(f,T)= \frac{1}{\theta} Y_1(\theta T)\left( Y_1(\theta T) - \theta T  \right).
\end{align*}
Using the moments of Poisson random variables one can verify that as expected, $\E \left( s_\theta(f,T)\right) = T$,  but its variance is
\begin{align*}
\textrm{Var}\left(s_\theta(f,T)\right) =  \frac{ T  + 4 \theta T^2 + \theta^2 T^3}{\theta}.
\end{align*}
This shows that for $\theta \approx 0$, $\textrm{Var}\left(s_\theta(f,T)\right)$ is very high and hence the Girsanov estimator will perform poorly. Also for $\theta = 0$ we cannot determine $S_\theta(f,T) $ with this method even though $S_\theta(f,T)$ is well-defined, as shown by \eqref{sens_example}. 

Recall the definition of $\Psi_\theta$ and $R_\theta$ from \eqref{defpsixtheta} and \eqref{defrktheta}. For this example, $\Psi_\theta(x,f,t) =x +\theta t$ and hence $R_\theta(x,f,t,1) = 0$ for all $x$ and $t$. Therefore Theorem \ref{thm:sens} says that $S_\theta(f,T) = \E\left( s_\theta(f,T)\right)$ with $s_\theta(f,T) = T$ (see \eqref{sensformula1}). Hence the estimator based on Theorem \ref{thm:sens} has variance $0$ ! Moreover this estimator works for $\theta = 0$, unlike the Girsanov method. 
 } \hfill  $\square$
\end{example}

Note that for small values of $\theta$, the reaction in Example \ref{example1} will have very few firings in the observation time period $[0,T]$. 
Hence this example indicates that if the sensitive parameter $\theta$ is the rate constant of a reaction which is \emph{slow} in the reference time-scale of the system, then the Girsanov method can be highly inefficient, while an estimator based on Theorem \ref{thm:sens} can perform much better. We illustrate this through another example.
\begin{example}
\label{example_bd}{ \rm {\bf (Single-species birth-death model) : }
Consider the process in which a chemical species $\mathcal{S}$ is created and destroyed according to the following two reactions:
\begin{align*}
\emptyset \stackrel{1 }{\rightarrow} \mathcal{S} \stackrel{\theta }{\rightarrow} \emptyset.
\end{align*}
The population at time $t$ is given by $X_\theta(t) \in \N_0$. Conditioned on $X_\theta(t) = x$, the propensity functions for the first and second reactions are $\lambda_1(x,\theta) = 1 $ and $\lambda_2(x, \theta) = \theta x$ respectively. We assume that $X_\theta(0) = 0$. Let $f(x) = x$, then 
\begin{align*}
\Psi_{\theta}(x,f,t) = \mathbb{E} \left( X_\theta(t) \middle\vert X_{\theta}(0) = x\right) = x e^{ - \theta  t} + \frac{1}{  \theta }  (1 - e^{ - \theta t}  ).
\end{align*}
This allows us to compute
\begin{align}
\label{rktheta_example}
R_{\theta}(x,f,t,2) & =  \int_{0}^{t} \left(     1   -  e^{ - \theta s}\right) e^{ -  (  1 + \theta x ) (t-s)} ds  = \left(  \frac{ 1 - e^{ -( 1 + \theta  x )t }  }{ 1+ \theta x  }  \right) - \left(  \frac{ e^{- \theta  t } - e^{ -( 1 + \theta x )t }  }{ 1 + \theta (x - 1)  }  \right).
\end{align}
According to Theorem \ref{thm:sens} we have $S_{\theta}(f,T)  =  \E (s_\theta(f,T))$ where 
\begin{align}
\label{bdex_stheta}
s_\theta(f,T) = - \int_{0}^T   X_\theta (t) dt 
+ \sum_{  i = 0 : \sigma_i < T   }^{\infty}  X_\theta( \sigma_i)  R_{\theta}( X_\theta( \sigma_i) ,f, T -\sigma_i ,2).
\end{align}
Equation \eqref{rktheta_example} gives us the explicit formula for $R_\theta$. With this in our hands, we can generate samples from the distribution of $s_\theta(f,T)$ and compute the sample variance 
$\hat{v}(s_\theta(f,T))$ (see \eqref{formulaforsamplevariance}). We can do the same for $s_\theta(f,T)$ given by the Girsanov method (see \eqref{form_gir}). 
\begin{table}[h]
\caption{Comparison of sample variances}
\label{table:examplebd}
\centering
\begin{tabular}{|c | c | c | c | c | c | }
\hline
& & \multicolumn{4}{|c| }{ $T$ }    \\    \cline{3-6} 
$\theta$ & Method  & 1 & 5 & 10 & 20 \\ \hline 
\multirow{2}{*}{$0.1$} & Girsanov & 10.7365 & 2303.39 & 20698 & 112758   \\
& Our & 0.2905 & 20.473 & 90.8017 & 326.391 \\ \hline
\multirow{2}{*}{$0.01$} & Girsanov & 99.6366 & 33719.1 & 489925 & $6.6203 \times 10^{6}$   \\
& Our & 0.3343 & 37.6357 & 281.547 & 1923.5 \\ \hline
\multirow{2}{*}{$0.001$} & Girsanov & 302.818 & 373393 & $5.76119\times 10^{6}$ & $7.81587\times 10^{7}$  \\
& Our & 0.3447 & 41.6996 & 329.948 & 2519.29 \\ \hline
\multirow{2}{*}{$0.0001$} & Girsanov & 10004.2 & $3.85596\times 10^{6}$ & $6.50532\times 10^{7}$ & $7.99393\times 10^{8}$  \\
& Our & 0.3364 & 41.1659 & 334.106 & 2620.81 \\ \hline
\end{tabular}
\end{table}
In Table \ref{table:examplebd} we present a comparison of the sample variances obtained by both these methods for $\theta = \{ 0.1, 0.01,0.001,0.0001 \}$ and $T = \{ 1,5,10,20 \}$. 
These results allow us to make the following observations. For all the values of $\theta$ and $T$ that are considered, our method gives a much lower sample variance than the Girsanov method. 
Moreover the variances obtained by the Girsanov method increase significantly as $\theta$ decreases in size, while for our method they remain the same. We discussed before that the efficiency of an estimator is inversely proportional to its variance. The results in Table \ref{table:examplebd} reinforce our claim about the inefficiency of the Girsanov method while estimating the sensitivity with respect to the rate constants of \emph{slow} reactions. In such situations, the estimator based on Theorem \ref{thm:sens} can be more efficient.
} \hfill $\square$
\end{example}

\section{Algorithm for estimation of the parameter sensitivity}

In Example \ref{example_bd}, we had an analytical expression for the function $R_{\theta}$ that allowed us to generate samples of $s_\theta(f,T)$ using \eqref{sensformula1}. For most examples of interest, we would not have this luxury. Therefore we need to find a way to estimate the required values of the function $R_{\theta}$ ``on the run". We deal with this issue now and present an algorithm to generate samples for estimating the parameter sensitivity. Note that the method we propose is just one way to estimate the sensitivity using Theorem \ref{thm:sens}. There could be better approaches, perhaps using smarter data structures, that can do the job more efficiently. We encourage the research community to explore this issue.

Let $\{ X_\theta(t) : t \geq 0\}$ be a Markov process with generator $\mathbb{A}_\theta$ and initial state $x_0$. Its jump times are given by $\sigma_0,\sigma_1,\dots$, where $\sigma_0 = 0$.
Our goal is to estimate the quantities of the form $R_{\theta}( X_\theta( \sigma_i) ,f, T -\sigma_i ,k)$ that appear in \eqref{sensformula1} and then evaluate $s_\theta(f,T)$.
Moreover to preserve the unbiasedness of our method, we require these estimates to be unbiased as well.
Estimating several such quantities in parallel is quite challenging and hence the algorithm we are about to present is much harder to implement than the Girsanov method. However once implemented, our algorithm can offer considerable speed-ups over the Girsanov method in certain situations. We shall demonstrate this later through examples. We first make the formula \eqref{sensformula1} more amenable for calculations.

Define
\begin{align}
\label{defeta}
\eta = \max \{ i \geq 0 :  \sigma_i < T \}
\end{align}
and let
\begin{align}
\label{defdeltat}
\Delta t_i  = \left\{
\begin{array}{cc}
 (\sigma_{i+1} - \sigma_i) & \textrm{ for } i = 0,\dots,\eta-1  \\
 (T - \sigma_\eta) & \textrm{ for } i = \eta . \\
\end{array}\right.
\end{align}
Then
\begin{align}
\label{defn_integral}
 \int_{0}^T \frac{ \partial \lambda_k ( X_\theta (t) , \theta ) }{ \partial \theta   } \Delta_{\zeta_k} f(X_\theta(t)) dt  = \sum_{ i = 0}^{\eta}  
\frac{ \partial \lambda_k ( X_\theta (  \sigma_i  ) , \theta ) }{ \partial \theta   } \Delta_{\zeta_k} f(X_\theta(  \sigma_i  )) \Delta t_i.
\end{align}
From \eqref{defrktheta} we know that evaluating $R_\theta$ requires us to compute exponentially weighted integrals, which can be cumbersome. To avoid this issue we do the following. 
If $\lambda_0(X_\theta( \sigma_i),\theta) > 0$, then let $\gamma_i$ be an exponential random variable with rate $\lambda_0(X_\theta( \sigma_i),\theta)$ and let 
\begin{align}
\label{defalphai}
\alpha_i = (T - \sigma_i - \gamma_i)^{+}.
\end{align}
Conditioned on $\lambda_0(X_\theta( \sigma_i),\theta)$, the random variable $\gamma_i$ is independent of everything else. Then for any $\zeta \in \N_0^d$
\begin{align*}
& \E \left(  \int_{0}^{T - \sigma_i} \Psi_\theta( X_\theta(\sigma_i) +\zeta, f,s  ) e^{- \lambda_0(X_\theta(\sigma_i) ,\theta) ( T - \sigma_i - s ) }  ds \middle \vert X_\theta(\sigma_i), \sigma_i  \right) 
\\& = \E \left( \Psi_\theta ( X_\theta(\sigma_i) +\zeta, f,\alpha_i   ) \middle \vert X_\theta(\sigma_i), \sigma_i    \right).
\end{align*}•
Hence for any $k = 1,\dots,K$ we have  
\begin{align}
\label{rtheta_non_absorbing}
 &  R_{\theta}( X_\theta( \sigma_i) ,f, T -\sigma_i ,k)  =  \frac{1}{ \lambda_0( X_\theta (\sigma_i) ,\theta)  }   \Big(  \E \left(  \Psi_{\theta}( X_\theta (\sigma_i) + \zeta_k,f, \alpha_i ) \middle \vert X_\theta( \sigma_i), \sigma_i  \right)  \notag \\ &  - \E \left(  \Psi_{\theta}( X_\theta (\sigma_i) ,f, \alpha_i ) \middle \vert X_\theta( \sigma_i), \sigma_i  \right)
  - \Delta_{\zeta_k} f(X_\theta (\sigma_i)) \Big). 
\end{align}•
On the other hand if $\lambda_0(X_\theta( \sigma_i),\theta) = 0$, then $X_\theta( \sigma_i)$ is an absorbing state for the Markov proces, which implies that 
$\Psi_\theta ( X_\theta (\sigma_i) ,f, t ) = f(X_\theta (\sigma_i))$ for all $t \geq 0$. Therefore 
\begin{align}
\label{rtheta_absorbing}
&  R_{\theta}( X_\theta( \sigma_i) ,f, T -\sigma_i ,k)  =  I_\theta( X_\theta (\sigma_i) +\zeta_k ,f, T - \sigma_i  )    - (T - \sigma_i) f(X_\theta (\sigma_i) +\zeta_k ) ,
\end{align}
where 
\begin{align}
\label{defixft}
I_\theta(x,f,t) = \int_{0}^{t} \Psi_\theta (x,f,s)ds.
\end{align}
Let $\hat{\Psi}_\theta( X_\theta (\sigma_i) + \zeta_k,f, \alpha_i), \hat{\Psi}_ \theta( X_\theta (\sigma_i),f, \alpha_i)$ and $\hat{I}_\theta( X_\theta (\sigma_i) +\zeta_k ,f, T - \sigma_i  )$ be unbiased estimators for $\Psi_\theta( X_\theta (\sigma_i) + \zeta_k,f, \alpha_i), \Psi_\theta( X_\theta (\sigma_i),f, \alpha_i)$ and $I_\theta( X_\theta (\sigma_i) +\zeta_k ,f, T - \sigma_i  )$ respectively. Assume that given $X_\theta( \sigma_i)$ and $ \sigma_i$, these estimators are independent of the sigma field $\mathcal{F}_{\sigma_i}$, where $\{ \mathcal{F}_t\}$ is the filtration generated by $\{ X_\theta(t) : t \geq 0 \}$. If we define
\begin{align*}
& \hat{R}_\theta(X_\theta( \sigma_i) ,f, T -\sigma_i ,k)  \\
&  = \left\{
\begin{array}{cc}
 \frac{ \hat{\Psi}_{\theta}( X_\theta (\sigma_i) + \zeta_k,f, \alpha_i )  -   \hat{\Psi}_{\theta}( X_\theta (\sigma_i) ,f, \alpha_i )   - \Delta_{\zeta_k} f(X_\theta (\sigma_i)) } { \lambda_0(X_\theta (\sigma_i),\theta) } & \textrm{ if }  \lambda_0(X_\theta (\sigma_i),\theta) > 0  \\
 \hat{I}_\theta( X_\theta (\sigma_i) +\zeta_k ,f, T - \sigma_i  )    - (T - \sigma_i) f(X_\theta (\sigma_i) +\zeta_k )  & \textrm{ otherwise } , \notag
\end{array}
\right.
\end{align*}
then due to \eqref{rtheta_non_absorbing} and \eqref{rtheta_absorbing} we must have
\begin{align}
\label{unbiasedestm1}
R_{\theta}( X_\theta( \sigma_i) ,f, T -\sigma_i ,k)  = \E \left(   \hat{R}_{\theta}( X_\theta( \sigma_i) ,f, T -\sigma_i ,k)  \middle \vert X_\theta( \sigma_i), \sigma_i  \right) .
\end{align}• 
In other words, given $X_\theta( \sigma_i)$ and $ \sigma_i$, $\hat{R}_{\theta}( X_\theta( \sigma_i) ,f, T -\sigma_i ,k) $ is an unbiased estimator for $R_{\theta}( X_\theta( \sigma_i) ,f, T -\sigma_i ,k)$. 
Observe that if $\lambda_0(  X_\theta (  \sigma_i  ) , \theta ) = 0$ for some $i$, then $X_\theta (  \sigma_i )$ is an absorbing state and hence the next jump time $\sigma_{i+1} = \infty$. Therefore for $i = 0,\dots,\eta-1$, $ X_\theta (  \sigma_i  )$ can never be an absorbing state. 

Define $\hat{s}_\theta(f,T)$ as
\begin{align}
\label{sthetahat:form2}
 \hat{s}_\theta(f,T) & = \sum_{k = 1}^K  \left[  \sum_{ i = 0}^{\eta-1}  
\frac{ \partial \lambda_k ( X_\theta (  \sigma_i  ) , \theta ) }{ \partial \theta   } \Delta_{\zeta_k} f(X_\theta(  \sigma_i  )) \left( \Delta t_i - \frac{1}{  \lambda_0(  X_\theta (  \sigma_i  ) , \theta )  } \right)  
\right.  \\
& \left. + \sum_{i = 0}^{\eta - 1} \frac{1}{  \lambda_0(  X_\theta (  \sigma_i  ) , \theta )   } \frac{ \partial \lambda_k ( X_\theta (  \sigma_i  ) , \theta ) }{ \partial \theta   } \left(  \hat{\Psi}_\theta ( X_\theta (\sigma_i) + \zeta_k, f ,\alpha_i   ) -  \hat{\Psi}_\theta (X_\theta (\sigma_i) , f ,\alpha_i )  \right)  \right.  \notag \\
& \left.  + \beta\frac{ \partial \lambda_k ( X_\theta (  \sigma_\eta  ) , \theta ) }{ \partial \theta   } \Delta_{\zeta_k} f(X_\theta(  \sigma_\eta  )) \left( \Delta t_\eta - \frac{1}{  \lambda_0(  X_\theta (  \sigma_\eta  ) , \theta )  } \right) \right. \notag \\
& \left.  + \beta \frac{1}{  \lambda_0(  X_\theta (  \sigma_\eta  ) , \theta )   } \frac{ \partial \lambda_k ( X_\theta (  \sigma_\eta  ) , \theta ) }{ \partial \theta   } \left(  \hat{\Psi}_\theta ( X_\theta (\sigma_\eta) + \zeta_k, f ,\alpha_\eta   ) -  \hat{\Psi}_\theta (X_\theta (\sigma_\eta) , f ,\alpha_\eta )  \right)  \right. \notag \\
& \left. +(1 - \beta) \frac{ \partial \lambda_k ( X_\theta (  \sigma_\eta  ) , \theta ) }{ \partial \theta   } \left(  \hat{I}_\theta ( X_\theta (\sigma_\eta) + \zeta_k, f ,\Delta t_\eta   ) -  (\Delta t_\eta ) f( X_\theta (\sigma_\eta) )  \right) \right], \notag
\end{align}
where 
\begin{align*}
\beta= \left\{
\begin{array}{cc}
 1 & \textrm{ if }\lambda_0(  X_\theta (  \sigma_\eta  ) , \theta ) > 0 \\
 0 & \textrm{ otherwise }. \\
\end{array}\right.
\end{align*}
Since $s_\theta(f,T)$ is given by \eqref{sensformula1}, the relations \eqref{defn_integral} and \eqref{unbiasedestm1}  imply that 
$\E (s_\theta(f,T)) = \E ( \hat{s}_\theta (f,T))$. Hence due to Theorem \ref{thm:sens} we have
\begin{align}
\label{sensformula3}
S_\theta(f,T) = \E (\hat{s}_\theta (f,T)).
\end{align}
which shows that we can estimate $S_\theta(f,T)$ by computing the sample mean of several independent realizations of the random variable $\hat{s}_\theta(f,T)$. Now we outline an algorithm to obtain such  realizations.

Observe that the evaluation of $\hat{s}_{\theta}(f,T)$ requires us to estimate either $\Psi_\theta( X_\theta (\sigma_i) + \zeta_k,f, \alpha_i)$ and $\Psi_\theta( X_\theta (\sigma_i),f, \alpha_i)$ or $I_\theta( X_\theta (\sigma_i) +\zeta_k ,f, T - \sigma_i )$ for many values of $i$ and $k$. 
These quantities can be estimated using several paths of the Markov process with generator $\mathbb{A}_\theta$ and initial states $X_\theta (\sigma_i) + \zeta_k$ or $X_\theta (\sigma_i)$. 
If we generate such paths independently for each $i$ and $k$, then the problem quickly becomes computationally intractable. So we need another way. Generally a Markov process representing chemical kinetics is such that its various paths visit the same states again and again. We can use this observation to our advantage and try to estimate all the required quantities with the same set of paths. For this purpose, we do the following. In addition to the \emph{base} path of the Markov process we also independently generate $M$ \emph{auxiliary} paths with the same initial state. From the \emph{base} path we determine what quantities need to be estimated and with the help of the \emph{auxiliary} paths we carry out the estimation. Note that if the state $x$ is visited by $m$ paths $X_1,\dots,X_m$ at times $t_1,\dots,t_m$, then $\Psi_\theta(x,f,t)$ and $I_\theta(x,f,t)$ can be estimated as
\begin{align}
\label{howtoestimate}
\hat{\Psi}_\theta (x ,f,t) = \frac{1}{m} \sum_{i = 1}^m f( X_i( t_i + t ) )  \textrm{  and  }   \hat{I}_\theta(x,f,t) = \frac{1}{m} \sum_{i = 1}^m \int_{0}^{t} f( X_i( t_i + s ) )ds  .
\end{align}• 
If there is no path that reaches $x$ then $\Psi_\theta(x,f,t)$ or $I_\theta(x,f,t)$ cannot be estimated this way. In this case, we estimate such a quantity using a single independently generated path. Once all the required quantities are estimated we can obtain a realization of $\hat{s}_\theta(f,T)$ using \eqref{sthetahat:form2}. 

A rough sketch of the method we just described is given below.
\begin{enumerate}
\item Generate a \emph{base} path of the Markov process.
\begin{itemize}
\item Store all the quantities in \eqref{sthetahat:form2} that can be directly calculated from the \emph{base} path.
\item Create a list $\mathcal{L}$ with all the quantities of the form  $\Psi_\theta (x,f,t)$ or $I_\theta(x,f,t)$ that need to be estimated.
\end{itemize}
\item Generate $M$ \emph{auxiliary} paths of the Markov process. 
\begin{itemize}
\item Use these paths to estimate the quantitites in $\mathcal{L}$.
\item If a quantity cannot be estimated with these paths, then estimate it with a single independently generated path.
\end{itemize}
\item Use the expression \eqref{sthetahat:form2} to evaluate $\hat{s}_\theta(f,T)$.
\end{enumerate}
All the paths can be generated using Gillespie's \emph{Stochastic Simulation Algorithm} \cite{GP}. Since our method relies on the process visiting the same states again and again, we simulate the paths until time $\kappa T$ (rather than $T$), where $\kappa > 1$ is an \emph{extension factor}. The parameters $M$ and $\kappa$ can be used to fine-tune the performance of our method. Note that our method would produce an unbiased estimate of the parameter sensitivity irrespective of the choice of $M$ and $\kappa$.  

In implementing the method sketched above, one has to deal with many issues. It is important to store all the information properly and perform all the book-keeping that is necessary to estimate the quantitites in $\mathcal{L}$. The efficiency of our method depends crucially on how we store the list $\mathcal{L}$. It has to be done in such a way so that when a path being simulated reaches a state $x$, one can easily determine if a quantity of the form $\Psi_\theta (x,f,t)$ or $I_\theta(x,f,t)$ is inside $\mathcal{L}$. In our implementation we use a \emph{Hashtable} (see \cite{CormenAlgo}) for this purpose, where the \emph{hashing} function maps the states in $\N^d_0$ to the indices of a large array. The full details of our implementation are contained in \cite{report}.

From now on we shall refer to the method outlined above as APA, which is an acronym for the \emph{Auxiliary Path Algorithm}. Even though this method is designed to compute the sensitivity with respect to a single scalar parameter $\theta$, we can easily extend it to compute sensitivity with respect to many parameters simultaneously. Even though APA is harder to program than the Girsanov method and has larger memory requirements\footnote{APA requires memory of a size which scales linearly with the total number of jumps in the Markov process within the time period $[0, \kappa T]$. See \cite{report} for details.}, it can be very useful in certain situations. We illustrate this through a couple of examples. For both the examples, we set $M$ (number of \emph{auxiliary} paths) to $50$ and $\kappa$ (\emph{extension factor}) to $3$. In all our numerical examples, we will estimate the sensitivity $\hat{S}_\theta(f,T)$ using the minimum sample size $N$ that is needed to ensure that the half-length of the $95 \%$ confidence interval is below $5 \%$ of $| \hat{S}_\theta(f,T) | $, where $|\cdot|$ is the absolute value function. In our results, the sensitivity estimate $\hat{S}_\theta(f,T)$ will be written in the form $s \pm l$ which means that the $95\%$ confidence interval is equal to $[s - l, s+l]$. While presenting the results we always indicate the CPU time\footnote{All the computations in this paper were performed using C++ programs on an Apple machine with a 2.2 GHz Intel i7 processor.} (in seconds) that was required for the estimation. The CPU time can be taken as a measure of the efficiency of a given method.
\begin{example}
\label{example_bd2}{ \rm {\bf (Single-species birth-death model) : } Let us revisit Example \ref{example_bd} of a simple birth-death process. In that example we ``cheated" in the sense that we used an exact expression for the function $R_\theta$ (see \eqref{rktheta_example}) to generate the samples for sensitivity estimation. Hence we did not have to go through the complex estimation procedure that is required by APA. To compare the performance of the Girsanov method and APA for this example, we present the results for sensitivity estimation obtained by both these methods in Table \ref{table:examplebd2}. These results are provided for $\theta = \{0.1,0.01,0.001,0.0001\}$ and $T = \{5,10 \}$.
\begin{table}[h]
\caption{Comparison of results for the Birth-Death model} 
\label{table:examplebd2} 
\centering
\begin{tabular}{|c | c | c | c | c | c |}
\hline
$\theta$ & T &  Method  & $\hat{S}_\theta(f,T)$ & N & CPU time (s)    \\ \hline 
\multirow{4}{*}{$0.1 $} 
& \multirow{2}{*}{5} & Girsanov & $-8.9671  \pm 0.4483$ & 46271& 0.0449 \\
& & APA &   $-8.8021  \pm 0.4400 $ &  448 &  0.1094 \\ \cline{2-6}
& \multirow{2}{*}{10} & Girsanov &  $-25.7869 \pm  1.2893 $ & 45885 & 0.0851  \\
& & APA &    $-26.5226  \pm  1.3231  $ & 270 & 0.1579  \\  \hline \hline

\multirow{4}{*}{$0.01 $} 
& \multirow{2}{*}{5} & Girsanov & $12.1866  \pm 0.6093 $ & 370157 & 0.2816 \\
& & APA & $-12.2895  \pm  0.6142 $ & 384   & 0.0716\\ \cline{2-6}
& \multirow{2}{*}{10} & Girsanov &  $-47.3787 \pm 2.3689 $ & 334872 & 0.4460\\
& & APA &  $-46.5443   \pm 2.3234 $ & 231   & 0.0895  \\ \hline \hline

\multirow{4}{*}{$0.001$}
& \multirow{2}{*}{5} & Girsanov & $-12.5555  \pm  0.6278 $ & $3.62 \times 10^6 $ &  2.7242  \\
& & APA & $-12.4529 \pm  0.6218$ & 391   & 0.0701  \\ \cline{2-6}
& \multirow{2}{*}{10} & Girsanov &  $ -49.132   \pm  2.4566$ & $3.46 \times 10^6$  & 4.5133  \\
& & APA &  $ -50.823\pm  2.5409  $ & 223  & 0.0785 \\ \hline \hline

\multirow{4}{*}{$0.0001$} 
& \multirow{2}{*}{5} & Girsanov & $-12.7847 \pm 0.6392$ & $3.50 \times 10^7$ & 25.8511 \\
& & APA & $-12.6859  \pm 0.6333  $ & 449 & 0.0773 \\ \cline{2-6}
& \multirow{2}{*}{10} & Girsanov & $-50.6981\pm  2.5349 $ & $ 3.28  \times 10^7 $ & 41.9501   \\
& & APA &  $-50.0198  \pm  2.4969 $ & 249 &  0.0847 \\ \hline \hline

\end{tabular}
\end{table}

From Table \ref{table:examplebd2} we can make the following observations. Unlike the Girsanov method, the performance of APA stays the same as $\theta$ decreases in magnitude. APA is slightly slower than the Girsanov method for $\theta = 0.1$, but faster for all other values of $\theta$. In fact for $\theta = 0. 001$ and $\theta = 0.0001$, APA is more efficient than the Girsanov method by a factor of more than $10$ and $100$ respectively. 
} \hfill $\square$
\end{example}

\begin{example}
\label{example_ge}{ \rm {\bf (Gene Expression Network) : } 
We now consider the model for gene transcription and translation that appeared in \cite{MO}. It has three species : Gene ($G$), mRNA ($M$) and protein ($P$), and there are four reactions given by
\begin{align*}
G \stackrel{ k_R }{\rightarrow} G + M , \  \  M \stackrel{ k_P }{\rightarrow} M + P,  \  \ M \stackrel{ \gamma_R }{\rightarrow} \emptyset  \   \textrm{ and } P \stackrel{ \gamma_P}{\rightarrow} \emptyset.
\end{align*}
The first two reactions represent the translation of a single gene into mRNA and the transcription of mRNA into proteins. The final two reactions are degradation of mRNA and protein molecules. The rate constants for translation, transcription, mRNA degradation and protein degradation are $k_R$, $k_P$, $\gamma_R$ and $\gamma_P$ respectively.  Typically, a mRNA molecule decays a lot faster than a protein molecule. The half-life of the former is usually in minutes (or seconds) while the half-life of the latter is usually in hours. We shall fix $k_R = 0.6 \ \mathrm{min}^{-1}$, $k_P = 1.7329 \ \mathrm{min}^{-1}$ and $\gamma_R = 0.3466 \ \mathrm{min}^{-1}$. These values are given in \cite{MO} for \emph{lacA} gene in \emph{E.Coli}. Our sensitive parameter $\theta$ is the protein degradation rate $\gamma_P$.

The state of this system at time $t$ is $X_\theta(t) = (X_{\theta,1}(t) , X_{\theta,2}(t))$, where $X_{\theta,1}(t)$ and $X_{\theta,2}(t)$ are the number of mRNA and protein molecules respectively. We assume that 
$(X_{\theta,1}(0) , X_{\theta,2}(0)) = (0,0)$. Define $f : \N^2_0 \to \R$ by $f(x_1,x_2) = x_2$. We would like to estimate
\begin{align}
\label{sens_example2}
S_\theta(f,T) = \frac{\partial  }{\partial \theta} \E \left( f(X_\theta(T)) \right) = \frac{\partial  }{\partial \theta} \E ( X_{\theta,2}(T) ) .
\end{align}

We consider five values of $\theta$ : $0.0693 \ \mathrm{min}^{-1}$, $0.0116 \ \mathrm{min}^{-1}$, $0.0023 \ \mathrm{min}^{-1}$, $0.0012 \ \mathrm{min}^{-1}$ and $0$. These correspond to the protein half-life of $10 \ \mathrm{min}$, $1\ \mathrm{hr}$, $5 \ \mathrm{hr}$, $10 \ \mathrm{hr}$ and $\infty$. In Table \ref{table:examplege} we estimate the sensitivity \eqref{sens_example2} using APA and the Girsanov method for two values of $T$ :  $5  \ \mathrm{min}$ and $10 \  \mathrm{min}$. Of course for $\theta = 0$, the Girsanov method cannot be applied and so the values are only given for APA. 
\begin{table}[h]
\caption{Comparison of results for the Gene Expression Network} 
\label{table:examplege} 
\centering
\begin{tabular}{|c | c | c | c | c | c |}
\hline
$\theta$ & T &  Method  & $\hat{S}_\theta$ & N & CPU time (s)    \\ \hline 
\multirow{4}{*}{$0.0693 $} 
& \multirow{2}{*}{5} & Girsanov & $-12.1080  \pm  0.6054 $ & 331945 & 0.6330 \\
& & APA & $ -12.2757 \pm 0.6136 $ &  1913 & 1.1284  \\ \cline{2-6}
& \multirow{2}{*}{10} & Girsanov &  $-61.3473 \pm   3.0670 $ & 253048 & 1.2661  \\
& & APA & $-61.2132 \pm  3.0589   $ & 1277  & 1.9463 \\ \hline \hline

\multirow{4}{*}{$0.0116 $} 
& \multirow{2}{*}{5} & Girsanov & $-13.878 \pm 0.6939  $ & 1975557& 3.5028 \\
& & APA & $-13.821 \pm  0.6908 $ & 1943   & 0.9470\\ \cline{2-6}
& \multirow{2}{*}{10} & Girsanov &  $-80.8423 \pm 4.0420 $ & 1554726 & 6.7256\\
& & APA &  $  -82.7886    \pm 4.1375 $ & 1664   & 1.9465   \\ \hline \hline

\multirow{4}{*}{$0.0023$}
& \multirow{2}{*}{5} & Girsanov & $-14.6221   \pm  0.7311 $ & $9406555 $ &  16.259 \\
& & APA & $ -14.8203 \pm  0.7410 $ & 2384  & 1.1224 \\ \cline{2-6}
& \multirow{2}{*}{10} & Girsanov &  $  -89.1083   \pm  4.4554 $ & $7102591 $  & 29.4725  \\
& & APA &  $-86.6336 \pm  4.3314 $ & 2236 & 2.3528 \\ \hline \hline

\multirow{4}{*}{$0.0012$} 
& \multirow{2}{*}{5} & Girsanov & $ -14.9095\pm 0.7455 $ & $17378930$ & 30.0565 \\
& & APA & $-14.9071   \pm 0.7452$ & 12047 & 0.9351\\ \cline{2-6}
& \multirow{2}{*}{10} & Girsanov &  $-88.5277\pm  4.4264 $ & $13929778$ & 57.6593    \\
& & APA &  $-86.4873 \pm 4.3242 $ & 1919   &  1.9933 \\  \hline \hline

\multirow{2}{*}{$0$}
& 5 & APA &  $-15.037 \pm 0.7518  $ &  1935 & 0.8865 \\ \cline{2-6}
& 10 & APA &  $-83.5049\pm 4.1741 $ & 1797 &  1.8591 \\ \hline \hline
\end{tabular}
\end{table}
The results in Table \ref{table:examplege} have the same underlying message as in Example \ref{example_bd2}. The performance of the Girsanov method deteriorates as $\theta$ gets smaller, while the performance of APA remains unchanged. Apart from the biologically unrealistic case of $\theta = 0.0693 \ \mathrm{min}^{-1}$ (corresponding to protein half-life of $10 \ \mathrm{min}$), APA outperforms the Girsanov method in all other cases. The degree of outperformance is directly proportional to the smallness of $\theta$.
} \hfill $\square$
\end{example}

Examples \ref{example_bd2} and \ref{example_ge} clearly indicate that APA can be much more efficient than the Girsanov method for the estimation of sensitivity with respect to the rate constant of a \emph{slow} reaction in a chemical reaction network.

With so many methods in the literature for estimating the parameter sensitivity, one may wonder which method should be used in a certain situation. We now offer some suggestions. In general if one is willing to tolerate a bias in the sensitivity estimate, then the finite difference schemes given in \cite{KSR1,DA} can be much faster than both the unbiased methods (APA and Girsanov). 
Such schemes estimate the quantity $S_{\theta,h}(f,T)$ (see \eqref{fdmscheme}), by coupling the processes $X_\theta$ and $X_{\theta+h}$ in an intelligent way. 
Three such couplings are : \emph{Common Reaction Numbers} (CRN) (see \cite{KSR1}) , \emph{Common Reaction Paths} (CRP) (see \cite{KSR1}) and \emph{Coupled Finite Differences} (CFD) (see \cite{DA}). The bias introduced by these schemes is proportional to $h$ and one may be want to reduce the bias by picking a very small $h$. However as $h$ gets smaller, the variance of the finite difference estimator gets larger, making these schemes inefficient in comparison to the unbiased methods. 
To illustrate this point we return to Example \ref{example_ge} of the gene expression network. We fix $\theta = 0.0116 \ \mathrm{min}^{-1} $ and $T = 10 \ \mathrm{min}$. The sensitivity value in Example \ref{example_ge} is now estimated using the three finite difference schemes : CRN, CRP and CFD, for four different values of $h$ : $10^{-2}$, $10^{-3}$, $10^{-4}$ and $10^{-5}$. We perform these estimations using the \emph{SPSens} software \cite{software}.  
The results \footnote[2]{In this paper, we always estimate the parameter sensitivity using the minimum number of samples $N$ that are needed to ensure that the half-length of the $95\%$ confidence interval is below $5\%$ of the magnitude of the sensitivity estimate. The current version of the software \emph{SPSens} does not allow us to specify such a stopping criterion. 
Hence by trial and error, we estimate the sensitivity using a sample size $N$ which approximately satisfies this criterion.} are given in Table \ref{tableforfdm}. One can easily verify that the efficiency of each finite difference scheme deteriorates as $h$ gets smaller. From Table \ref{table:examplege} we see that for this choice of $\theta$ and $T$, the Girsanov method and APA estimated the sensitivity in $6.7256 \ \mathrm{sec}$ and $1.9465\ \mathrm{sec}$ respectively. For higher values of $h$, the finite difference schemes (especially CRP and CFD) are much faster than both these methods. However the situation is reversed for the smallest value $h = 10^{-5}$.
\begin{table}[h]
\caption{Efficiency of various finite difference schemes}
\label{tableforfdm}
\centering
\begin{tabular}{|c | c | c | c | c | }
\hline
$h $  &   Method  &$\hat{S}_{\theta,h}$ & N & CPU Time (s) \\ \hline 
\multirow{2}{*}{$10^{-2}$} 
& CRN& $ -82.6147 \pm 4.1478 $  & $ 47500$  & 0.4957 \\
& CRP & $ -81.2400 \pm 4.0207 $  &  2500 & 0.0312   \\ 
& CFD & $ -80.1277 \pm 4.0287$  &  2350  &  0.0229    \\ 
\hline
\multirow{2}{*}{$10^{-3}$} 
& CRN& $ -81.9867 \pm 4.0577 $  & $ 600000$  & 6.4498  \\
& CRP & $ -81.2000  \pm 4.0314 $  &  20000  & 0.2194    \\ 
& CFD & $ -81.7778  \pm 4.1345 $  &  18000  & 0.1498    \\ 
\hline
\multirow{2}{*}{$10^{-4}$} 
& CRN& $ -80.1203 \pm 4.0080 $  & $ 6400000$  & 68.2093  \\
& CRP & $ -83.3846  \pm 4.0611 $  &  195000  & 2.1016   \\ 
& CFD & $ -81.7895  \pm 4.0499 $  &  190000  & 1.5559    \\ 
\hline
\multirow{2}{*}{$10^{-5}$} 
& CRN& $ -83.0656 \pm 4.0350 $  & $ 64000000$  & 724.4415  \\
& CRP & $ -82.1538  \pm 4.0239 $  &  1950000  & 21.0040  \\ 
& CFD & $ -83.6316  \pm 4.1103 $  &  1900000  & 15.6029   \\ 
\hline
\end{tabular}

\end{table}
Based on the this discussion, we suggest the following strategy for picking the \emph{best} method for estimating the sensitivity with respect to the rate constant $\theta$ of a particular reaction. If some amount of \emph{unknown} bias can be tolerated, then a finite difference scheme (preferably CRP or CFD) must be used. However if one does not want to sacrifice the accuracy of the estimate, then instead of trying to reduce the bias by picking a very small $h$ in a finite difference scheme, one should use an unbiased method : APA or the Girsanov method. The results in this paper show that APA would be a better choice for small values of $\theta$ while the Girsanov method would be more appropriate for large values of $\theta$. This strategy for method selection is heuristically depicted in Figure \ref{fig:strategy}.
\vspace{10pt}
\begin{figure}[h!]
  \begin{center}
    \includegraphics[height=5.0cm]{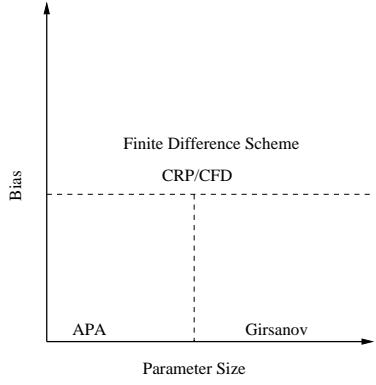}
  \end{center}
  \caption{Strategy for picking the sensitivity estimation method based on the size of the sensitive parameter and the bias that can be allowed.}
\label{fig:strategy}
\end{figure}

\section{Proof of the main result}

In this section we will prove our main result Theorem \ref{thm:sens}. We start by proving a simple lemma.

\begin{lemma}
\label{lemma1}
Let $\{ X_{\theta}(t) : t \geq 0 \}$ be a Markov process with generator $\mathbb{A}_{\theta}$ given by \eqref{genctmc2}. Suppose that the propensity functions $\lambda_1,\dots,\lambda_K$ satisfy Condition \ref{cond:propensityfunctions} at $\theta$. Assume that $\E \left( \left\| X_\theta(0)\right\|^p \right) < \infty$ for all $p \geq 0$. Then we have the following. 
\begin{itemize}
\item[(A)] For any $p\geq 0$ and $T > 0$
\begin{align*}
\sup_{t \in [0,T]} \E \left(  \left\| X_\theta(t) \right\|^p \right) < \infty.
\end{align*}
\item[(B)] If $f : \N^d_0 \to \R$ is a function satisfying Condition \ref{cond:bddness} then for any $t \geq 0$
\begin{align*}
\E \left(  f ( X_\theta(t) ) \right) = \E \left(  f ( X_\theta( 0 ) ) \right) + \E \left(  \int_{0}^{t}   \mathbb{A}_{\theta} f ( X_\theta(s) )    ds\right)
\end{align*}
\end{itemize}
\end{lemma}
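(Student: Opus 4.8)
The plan is to establish the two parts in sequence, since the moment bound (A) is the substantive step and it is precisely what licenses the passage to the limit needed for the Dynkin-type identity (B). For (A) I would test the generator against the Lyapunov function $g(x) = (1+\|x\|)^p$, treating first $p \ge 1$ (the range $0 \le p < 1$ then follows from Jensen's inequality). Since the process lives on $\N_0^d$, we have $\|x\| = \langle \bar 1, x\rangle$, so a firing of reaction $k$ changes $\|x\|$ by exactly $\langle \bar 1, \zeta_k\rangle$, where part (C) of Condition \ref{cond:propensityfunctions} guarantees that $x+\zeta_k$ stays admissible whenever $\lambda_k(x,\theta)>0$. The crux is the differential inequality $\mathbb{A}_\theta g(x) \le C' g(x)$. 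To obtain it, I would split the sum $\mathbb{A}_\theta g(x) = \sum_k \lambda_k(x,\theta)\big(g(x+\zeta_k)-g(x)\big)$ according to whether $k \in P$. For $k \notin P$ we have $\langle \bar 1, \zeta_k\rangle \le 0$, hence $g(x+\zeta_k) \le g(x)$ and those terms are non-positive. For $k \in P$ a mean-value estimate gives $g(x+\zeta_k)-g(x) \le p\,\langle \bar 1, \zeta_k\rangle\,(1+\|x\|+\langle \bar 1, \zeta_k\rangle)^{p-1} \le C(1+\|x\|)^{p-1}$, using that there are only finitely many reactions. Summing over $k \in P$ and invoking part (D),
\[
\sum_{k \in P} \lambda_k(x,\theta)\big(g(x+\zeta_k)-g(x)\big) \le C(1+\|x\|)^{p-1}\sum_{k\in P}\lambda_k(x,\theta) \le C\,C_\lambda (1+\|x\|)^p = C' g(x).
\]
This is the step where part (D) is indispensable: it is exactly the linear bound on the rate of the population-increasing reactions that prevents $\mathbb{A}_\theta g$ from growing faster than $g$.

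To convert $\mathbb{A}_\theta g \le C' g$ into a uniform moment bound I would localize. Define $\tau_n = \inf\{t \ge 0 : \|X_\theta(t)\| \ge n\}$; on $[0,\tau_n)$ the process is bounded, so the elementary Dynkin identity applies to the stopped process and, writing $u_n(t) := \E\big(g(X_\theta(t \wedge \tau_n))\big)$, yields $u_n(t) \le \E(g(X_\theta(0))) + C'\int_0^t u_n(s)\,ds$. Gronwall's inequality then gives $u_n(t) \le \E(g(X_\theta(0)))\,e^{C't}$, a bound uniform in $n$ and finite by the hypothesis on the initial moments. Since $u_n(t) \ge (1+n)^p\,\P(\tau_n \le t)$, letting $n \to \infty$ forces $\P(\tau_n \le t) \to 0$, so the process is non-explosive and $X_\theta(t\wedge\tau_n) \to X_\theta(t)$ almost surely; Fatou's lemma then upgrades the uniform bound to $\E(g(X_\theta(t))) \le \E(g(X_\theta(0)))\,e^{C't}$, which is part (A).

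Part (B) follows from the same localization together with (A). For the bounded stopped process, $f(X_\theta(t\wedge\tau_n)) - f(X_\theta(0)) - \int_0^{t\wedge\tau_n}\mathbb{A}_\theta f(X_\theta(s))\,ds$ has zero expectation. Because $f$ and each $\lambda_k(\cdot,\theta)$ obey Condition \ref{cond:bddness}, both $f$ and $\mathbb{A}_\theta f$ have polynomial growth; applying part (A) with an exponent $r'$ strictly larger than the growth exponent $r$ of $f$ shows that $\{f(X_\theta(t\wedge\tau_n))\}_n$ is bounded in $L^{r'/r}$ and hence uniformly integrable, while $\E\int_0^t |\mathbb{A}_\theta f(X_\theta(s))|\,ds < \infty$ supplies an integrable dominating function for the integral term. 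Sending $n\to\infty$ and applying uniform integrability to the boundary terms and dominated convergence to the integral term yields the stated identity.

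I expect the main obstacle to be part (A): deriving the Lyapunov inequality with the correct sign handling of the population-increasing versus population-decreasing reactions through part (D), and then bootstrapping non-explosion out of the same uniform bound so that Fatou and dominated convergence are actually justified. Once the uniform moment estimate is in hand, part (B) is a routine localization argument that reduces to controlling integrability via polynomial growth.
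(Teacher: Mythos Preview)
Your proposal is correct and follows essentially the same route as the paper: a Lyapunov estimate on a power of the total population via the split $k\in P$ versus $k\notin P$ and part (D), localization with the hitting time of large balls, Gronwall, non-explosion via Markov's inequality, and Fatou for part (A); then part (A) plus polynomial growth to pass to the limit in the localized Dynkin identity for part (B). The only cosmetic differences are that the paper uses $\|x\|^p$ (truncated at level $M$) rather than your shifted $(1+\|x\|)^p$, and for part (B) the paper truncates $f$ to $f\wedge M$ and uses dominated convergence instead of your uniform-integrability argument via higher moments.
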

\begin{proof}
To prove part (A) of the lemma we can assume that $p$ is a positive integer. Recall the definition of the set $P$ from \eqref{setofpositivereactions}. 
Let 
\begin{align}
\label{defnq}
q= \left\{
\begin{array}{cc}
  \max_{k \in P} \langle \bar{1}, \zeta_k\rangle & \textrm{ if }  P \neq \emptyset\\
  0 & \textrm{ otherwise} .\\
\end{array}\right.
\end{align}
Pick a large $M > 0$ and let $g : \mathbb{N}^d_0 \to \R$ be the function given by
\begin{align*}
g(x) = \|x\|^p \wedge (M + q)^p,
\end{align*}
where $a \wedge b := \min \{a,b\} $.
Since $g$ is bounded, it is in the domain of the generator $\mathbb{A}_{\theta}$.
Define a stopping time $\tau_M$ by
\begin{align*}
\tau_M = \inf \{ t  \geq 0 : \|X_\theta (t)\| \geq M\}.
\end{align*}
Using Dynkin's theorem (see Lemma 19.21 in \cite{Kal}) we can conclude that
\begin{align*}
\E \left( g(X_\theta (t \wedge \tau_M ))\right) &= \E \left( g(X_\theta (0)) \right) + \E \left[  \int_{0}^{t \wedge \tau_M}   \mathbb{A}_{\theta} g(X_\theta (s)) ds  \right] \\
&= \E \left( g(X_\theta (0)) \right) + \E \left[  \int_{0}^{t \wedge \tau_M}   \sum_{k = 1}^K  \lambda_k(X_\theta (s), \theta)  \Delta_{\zeta_k} g(X_\theta (s)) ds  \right].
 \end{align*}
For $0 \leq t < \tau_M$, we have $ \| X_\theta (t) \| < M$, which implies that
\begin{align*}
\E \left( \| X_\theta (t \wedge \tau_M ) \|^p\right) & = \E \left(  \| X_\theta (0)\|^p \right) \\&  + \E \left(  \int_{0}^{t  \wedge \tau_M}  \sum_{k = 1}^K \lambda_k(X_\theta (s),\theta) \left(   \| X_\theta (s) + \zeta_k\|^p - \| X_\theta (s)\|^p \right) ds    \right).
\end{align*}
 Part (C) of Condition \ref{cond:propensityfunctions} ensures that when $\lambda_k(X_\theta (s),\theta)  > 0$, $(X_\theta (s) + \zeta_k) \in \N^d_0$ and hence $\| X_\theta (s) + \zeta_k\| = \langle \bar{1}, X_\theta (s)\rangle + \langle \bar{1}, \zeta_k \rangle$. This gives us
 \begin{align*}
&\E \left( \| X_\theta (t \wedge \tau_M ) \|^p\right) \\& \leq \E \left(  \| X_\theta (0)\|^p \right) + 
\E \left(  \int_{0}^{t\wedge \tau_M }  \sum_{k \in P} \lambda_k(X_\theta (s),\theta) \left(  ( \langle \bar{1} , X_\theta (s) + \zeta_k  \rangle )^p - \langle \bar{1} , X_\theta (s) \rangle^p \right) ds    \right) \\
& \leq  \E \left(  \| X_\theta (0)\|^p \right) + 2^p q^p  \E \left(  \int_{0}^{t }  \sum_{k \in P} \lambda_k(X_\theta (s),\theta) \left(  \| X_\theta (s \wedge \tau_M ) \|^{p-1} + 1 \right) ds    \right).
\end{align*}
Using part (D) of Condition \ref{cond:propensityfunctions} we can find a constant $C >0$ such that
\begin{align*}
\E \left( \| X_\theta (t \wedge \tau_M) \|^p )\right) &\leq  \E \left(  \| X_\theta (0)\|^p   \right) + Ct + C \int_{0}^t  \E \left( \|X_\theta (s \wedge \tau_M)\|^p \right) ds.   
\end{align*}•
From Gronwall's inequality we obtain
\begin{align}
\label{gronwall1}
\E \left( \| X_\theta (t \wedge \tau_M) \|^p )\right) \leq \left( \E ( \| X_\theta (0)\|^p )   + C t \right) e^{C t}.
\end{align}
Using Markov's inequality, for any $t >0$ we get
\begin{align*}
\lim_{M \to \infty} \P \left(   \tau_M <   t  \right) &= \lim_{M \to \infty}\P \left( \| X_\theta (t \wedge \tau_M) \|^p    \geq M^p \right)  \leq  \lim_{M \to \infty}\frac{\E \left( \| X_\theta (t \wedge \tau_M) \|^p )\right) }{M^p}  = 0.
\end{align*}•
The last limit is $0$ due to \eqref{gronwall1}. The above calculation shows that $\tau_M \to \infty$, in probability as $M \to \infty$.
Since $\tau_M$ is monotonically increasing we must have that $\tau_M \to \infty$ a.s. as $M \to \infty$.
Letting $M \to \infty$ in \eqref{gronwall1} and using Fatou's lemma we obtain
\begin{align*}
 \E \left( \| X_\theta (t) \|^p )\right) \leq \lim_{M \to \infty} \E \left( \| X_\theta (t \wedge \tau_M) \|^p )\right) \leq \left( \E ( \| X_\theta (0)\|^p )   + C t \right) e^{C t}.
\end{align*}•
Taking supremum over $t \in [0,T]$ proves part (A) of the lemma. For part (B), note that if $f$ satisfies Condition \ref{cond:bddness} then due to part (A) we must have that
\begin{align*}
\E \left( | f(X_\theta (t)) | \right) < \infty  \textrm{ and } \E \left(  \int_{0}^{t} | \mathbb{A}_{\theta} f( X_\theta (s)  ) | ds\right) < \infty.
\end{align*}•
We can assume that $f$ is a positive function. Pick a large $M$ and define a bounded function $f_M : \N_0^d \to \R$ by
\begin{align*}
f_M(x) = f(x) \wedge M.
\end{align*}
The function $f_M$ is bounded and using Dynkin's theorem (see Lemma 19.21 in \cite{Kal}) we get
\begin{align*}
\E \left( f_M(X_\theta( t ) ) \right) = \E \left( f_M(X_\theta(0) ) \right) + \E \left( \int_{0}^{t}  \mathbb{A}_{\theta} f_M(X_\theta( s ) ) ds  \right) 
\end{align*}
Taking the limit $M \to \infty$ and using the dominated convergence theorem proves part (B) of the lemma.
\end{proof}

Note that the state space of our Markov process is $\N^d_0$. Endowing this space with the discrete metric makes it a complete and separable metric space. Hence under our topology, any real-valued function on $\N^d_0$ is continuous. We now come to the main proof.

\begin{proof}[Theorem \ref{thm:sens}]
Let $X_\theta$ be the process given by \eqref{tcrep2}, with $X_\theta(0) = x_0$. Pick a $h \in \R$ and for any $x_1,x_2 \in \N_0^d$ define
\begin{align*}
\lambda^{min}_k(x_1,x_2,\theta,h) = \lambda_k(x_1,\theta) \wedge \lambda_k(x_2,\theta+h) \textrm{ for } k \in \{1,2,\dots,K\}
\end{align*}
and let $\lambda^{min}_0(x_1,x_2,\theta,h) = \sum_{k = 1}^K\lambda^{min}_k(x_1,x_2,\theta,h)$. 
Let the processes $\hat{X}_\theta$ and $\hat{X}_{\theta+h}$ be given by the following time change representations.
\begin{align}
\label{coupling1}
\hat{X}_{\theta}(t) &= x_0 + \sum_{k =1 }^K \zeta_k Y_k\left( \int_{0}^{t} \lambda^{min}_k ( \hat{X}_{\theta}(s),\hat{X}_{\theta+h}(s), \theta,h)ds \right) \\
& +\sum_{k = 1}^K \zeta_k Y^{(1)}_k \left(  \int_{0}^{t} \left( \lambda_k (\hat{X}_{\theta}(s),\theta)  -  \lambda^{min}_k (\hat{X}_{\theta}(s),\hat{X}_{\theta+h}(s), \theta,h) \right) ds\right)  \notag \\
\label{coupling2}
\hat{X}_{\theta+h }(t) &= x_0 + \sum_{k = 1}^K \zeta_k Y_k\left( \int_{0}^{t}\lambda^{min}_k (\hat{X}_{\theta}(s),\hat{X}_{\theta+h}(s), \theta,h)ds \right) \\
& + \sum_{k = 1}^K \zeta_k Y^{(2)}_{k} \left(  \int_{0}^{t} \left( \lambda_k(\hat{X}_{\theta+h}(s),\theta+h)  - \lambda^{min}_k (\hat{X}_{\theta}(s),\hat{X}_{\theta+h}(s), \theta,h)\right) ds\right), \notag
\end{align}
where $\{ Y_k, Y^{(1)}_k,  Y^{(2)}_k : k =1 ,2,\dots,K \}$ is a family of independent unit rate Poisson processes. The $Y_k$'s here is the same as those in \eqref{tcrep2}. 
These representations define a coupling that was used in \cite{DA} to construct efficient finite difference estimators for the parameter sensitivity.
Note that $\hat{X}_{\theta}$ and $\hat{X}_{\theta+h}$ are Markov processes with generators $\mathbb{A}_{\theta}$ and $\mathbb{A}_{\theta+h}$ respectively. They both start with the same state $x_0$.
Since $\lambda^{min}_k(x,x,\theta,h) \to \lambda_k(x,\theta)$ as $h \to 0$, we must have that $\hat{X}_\theta$ and $\hat{X}_{\theta + h}$ converges almost surely to $X_\theta$ as $h \to 0$. This convergence is in the Skorohod topology on the space $\N^d_0$. For details on this topology see Chapter 3 in \cite{EK}.

By definition 
\begin{align}
\label{sens_defn2}
S_\theta (f,T) = \lim_{h \to 0 } \frac{\E \left( f(\hat{X}_{\theta+h}(T)) \right) - \E \left( f(\hat{X}_\theta(T)) \right)}{h}.
\end{align}
Recall the definition of the generator $\mathbb{A}_{\theta}$ from \eqref{genctmc2}. Part (B) of Lemma \ref{lemma1} gives us the following relationships
 \begin{align}
 \E\left( f(\hat{X}_\theta(T)) \right) & = f(x_0) + \sum_{k = 1}^K \E \left(  \int_{0}^T  \lambda_k (\hat{X}_\theta(t), \theta ) \Delta_{\zeta_k} f(\hat{X}_\theta(t))dt \right) \label{dynkin1}   \textrm{ and } \\
\E\left( f(\hat{X}_{\theta + h }(T)) \right) & = f(x_0) + \sum_{k = 1}^K \E \left(  \int_{0}^T  \lambda_k(\hat{X}_{\theta+h}(t), \theta + h ) \Delta_{\zeta_k} f(\hat{X}_{\theta+h}(t)) dt \right). \label{dynkin2}
\end{align}•
Part (A) of Condition \ref{cond:propensityfunctions} allows us to write the Taylor expansion below
\begin{align*}
\lambda_k(x,\theta+h) = \lambda_k(x,\theta) + h \frac{ \partial \lambda_k (x,\theta)}{\partial \theta} + \frac{h^2}{2} \frac{ \partial^2 \lambda_k (x,\xi)}{\partial \theta^2},
\end{align*}
where $\xi \in (\theta, \theta + h)$. Substituting this expansion in \eqref{dynkin2} we get
\begin{align*}
 \E\left( f(\hat{X}_{\theta + h }(T)) \right)  & =  f(x_0) + \E \left(  \int_{0}^T \mathbb{A}_{\theta} f(\hat{X}_{\theta+h}(t)) dt \right) \\&
 + h \sum_{k = 1}^K \E \left(  \int_{0}^T  \frac{ \partial \lambda_k(\hat{X}_{\theta + h}(t), \theta)}{\partial \theta} \Delta_{\zeta_k} f(\hat{X}_{\theta+h}(t)) dt \right) \\
& +  \frac{ h^2 }{2} \sum_{k = 1}^K \E \left(  \int_{0}^T   \frac{ \partial^2 \lambda_k ( \hat{X}_{\theta+h} (t) ,\xi)}{\partial \theta^2}\Delta_{\zeta_k} f(\hat{X}_{\theta+h}(t)) dt \right). 
\end{align*}
Using \eqref{dynkin1} and \eqref{sens_defn2} we obtain
\begin{align*}
S_\theta (f,T) & =\lim_{h \to 0} \frac{1}{h}  \E \left(  \int_{0}^T \left( \mathbb{A}_{\theta} f(\hat{X}_{\theta+h}(t)) - \mathbb{A}_{\theta}f(\hat{X}_{\theta}(t) )  \right)  dt \right) 
\\&+ \lim_{h \to 0} \sum_{k = 1}^K \E \left(  \int_{0}^T  \frac{ \partial \lambda_k(\hat{X}_{\theta + h}(t), \theta)}{\partial \theta} \Delta_{\zeta_k} f(\hat{X}_{\theta+h}(t)) dt \right) \\
& + \lim_{h \to 0}   \frac{h}{2} \sum_{k = 1}^K \E \left(  \int_{0}^T   \frac{ \partial^2 \lambda_k ( \hat{X}_{\theta+h} (t) ,\xi)}{\partial \theta^2}  \Delta_{\zeta_k} f(\hat{X}_{\theta+h}(t)) dt \right).
\end{align*}
Note that part (B) of Condition \ref{cond:propensityfunctions} says that $\lambda_k$ and $\partial \lambda_k/ \partial \theta$ satisfy Condition \ref{cond:bddness}. Moreover by part (C) of Condition \ref{cond:propensityfunctions}, for some $\epsilon >0$, $\sup_{ \theta' \in (\theta - \epsilon, \theta+ \epsilon)} | \partial^2 \lambda_k ( \cdot,\theta' )/ \partial \theta^2  |$ also satisfies Condition \ref{cond:bddness}. Since both 
$\hat{X}_\theta$ and $\hat{X}_{\theta+h}$ converge to $X_\theta$ almost surely as $h \to 0$, we can use part (A) of Lemma \ref{lemma1} along with the dominated convergence theorem to conclude that the third limit on the right of the above expression is $0$ and the second limit is equal to 
\begin{align*}
\sum_{k = 1}^K \E \left(  \int_{0}^T  \frac{ \partial \lambda_k( X_{\theta}(t), \theta)}{\partial \theta} \Delta_{\zeta_k} f(X_{\theta}(t))  dt \right).
\end{align*}
Therefore
 \begin{align}
\label{sens1:1}
S_\theta (f,T)  = \sum_{k = 1}^K \E \left(  \int_{0}^T  \frac{ \partial \lambda_k( X_{\theta }(t), \theta)}{\partial \theta} \Delta_{\zeta_k} f(X_{\theta}(t)) dt \right) + \rho(\theta) 
\end{align}
 where
 \begin{align}
 \label{defn:rhotheta}
\rho(\theta) = \lim_{h \to 0} \frac{1}{h} \E \left(  \int_{0}^T \left( \mathbb{A}_{\theta} f(\hat{X}_{\theta+h}(t)) - \mathbb{A}_{\theta}f(\hat{X}_{\theta}(t) )  \right)  dt \right). 
\end{align}

For each $k = 1,\dots,K$ let
\begin{align*}
\tau^h_k =  & \inf \left\{ t \geq 0 :  Y^{(1)}_k \left(   \int_{0}^{t} \left( \lambda_k ( \hat{X}_{\theta}(s),\theta)  - \lambda^{min}_k (\hat{X}_{\theta}(s),\hat{X}_{\theta+h}(s), \theta,h)\right) ds\  \right)  \right. 
\\ & \left. + Y^{(2)}_k \left( \int_0^t \left( \lambda_k(\hat{X}_{\theta+h}(s),\theta+h)  - \lambda^{min}_k (\hat{X}_{\theta}(s),\hat{X}_{\theta+h}(s), \theta,h) \right) ds\right)  \geq 1   \right\}.
\end{align*}
Then the first time $\hat{X}_\theta$ and $\hat{X}_{\theta+h}$ have a different state is given by
\begin{align}
\label{defntauh}
\tau^h =  \min_{k} \tau^h_k.
\end{align}
Almost sure convergence of $\hat{X}_\theta$ and $\hat{X}_{\theta+h}$ to $X_\theta$ implies that $\tau^h \to \infty$ as $h \to 0$.
We can write
\begin{align}
\label{rhothetaexpansion}
\rho(\theta) & =  \lim_{h \to 0}  \frac{1}{h} \E \left(  \int_{T \wedge \tau^h}^T \left( \mathbb{A}_{\theta} f(\hat{X}_{\theta+h}(t)) - \mathbb{A}_{\theta}f(\hat{X}_{\theta}(t) )  \right)  dt \right)    = \lim_{h \to 0} \sum_{k = 1}^K  \sum_{i = 0}^{\infty} \frac{1}{h} \rho^h_{ik}(\theta),
\end{align}
 where
 \begin{align}
\label{defn:rhohiktheta}
\rho^h_{ik}(\theta) = \E \left(   \ind_{ \{  \sigma^h_i < \tau^h < \sigma^h_{i+1} , \tau^h = \tau^h_k  \}}  \int_{T \wedge \tau^h}^T \left( \mathbb{A}_{\theta} f(\hat{X}_{\theta+h}(t)) - \mathbb{A}_{\theta}f(\hat{X}_{\theta}(t) )  \right)  dt \right)
\end{align}• 
 and $\sigma^h_i$ is the $i$-th jump time of the process $\hat{X}_\theta$ (with $\sigma^h_0 = 0$). Let $\{\mathcal{F}^h_t \}$ be the filtration generated by the processes $\hat{X}_\theta$ and $\hat{X}_{\theta+ h}$. Note that since Poisson processes are strongly Markov, the processes $\hat{X}_\theta$ and $\hat{X}_{\theta+h}$ are also strongly Markov.
 Conditioning with respect to $\mathcal{F}_{ T \wedge \tau^h}$ and using part (B) of Lemma \ref{lemma1} along with the strong Markov property we get
 \begin{align*}
\rho^h_{ik}(\theta) &= \E \left(   \ind_{ \{  \sigma^h_i < \tau^h < \sigma^h_{i+1} , \tau^h = \tau^h_k  \}}  \E \left( \int_{T \wedge \tau^h}^T \left( \mathbb{A}_{\theta} f(\hat{X}_{\theta+h}(t)) - \mathbb{A}_{\theta}f(\hat{X}_{\theta}(t) )  \right)  dt   \middle \vert \mathcal{F}_{\tau^h \wedge T} \right) \right) \\
& = \E \left(   \ind_{ \{  \sigma^h_i < \tau^h < \sigma^h_{i+1} , \tau^h = \tau^h_k  \}}  \E \left( D_f( \hat{X}_{\theta+h}, T \wedge \tau^h, T ) - D_f( \hat{X}_{\theta}, T \wedge \tau^h, T ) 
\middle \vert \mathcal{F}_{\tau^h \wedge T} \right) \right),
\end{align*}
where $D_f(X,s,t) = f(X(t)) - f(X(s)).$
Let $\gamma^h_i = (\tau^h - \sigma^h_{i}) \wedge (\sigma^h_{i+1} - \sigma^h_{i})$. Given $\tau^h > \sigma^h_i$ and $\hat{X}_\theta(\sigma^h_i) = x$, $\gamma^h_i$ is exponentially distributed with rate 
\begin{align*}
r(x,\theta,h) = \lambda_0(x,\theta) + \lambda_0(x, \theta+h) - \lambda^{min}_0(x,x,\theta,h)
\end{align*}•
and the probability of the event $E^h_{ik} = \{  \tau^h < \sigma^h_{i+1} , \tau^h = \tau^h_k  \} = \{  \tau^h = \sigma^h_i + \gamma^h_i , \tau^h = \tau^h_k  \} $ is given by
\begin{align*}
p^h_{ik}(x) = \frac{\lambda_k(x,\theta) + \lambda_k(x, \theta+h) - 2  \lambda^{min}_k(x,x,\theta,h)}{r(x,\theta,h)} 
\end{align*}
Using Taylor's theorem we can write
\begin{align}
\label{proof_prob2}
 p^h_{ik} (x) = \frac{1}{r(x,\theta,h)} \left( \left|  \frac{\partial \lambda_k(x, \theta)}{\partial \theta }   \right| h +  l^\theta_{ik}(h,x)h \right),
\end{align}
where $l^\theta_{ik}(h,x) \to 0$ as $h \to 0$. Given $\sigma^h_i < \tau^h <T $ and $\hat{X}_\theta(\sigma^h_i) = x$, on the event $E^h_{ik}$ we have
\begin{align}
\label{icforx1x2_1}
(\hat{X}_{\theta}(\tau^h ),\hat{X}_{\theta+h}(\tau^h ) ) = \left\{
\begin{array}{cc}
 ( x, x + \zeta_k ) & \textrm{ if }  \lambda_k(x,\theta+h) > \lambda_k(x,\theta) \\
 ( x + \zeta_k , x ) & \textrm{ if }  \lambda_k(x,\theta+h) < \lambda_k(x,\theta) .\\
\end{array}\right.
\end{align}
Recall the definition of $\Psi_\theta$ from \eqref{defpsixtheta}. Let 
\begin{align*}
\hat{R}^h_{\theta}(x,f,t,k) & =  \int_{0}^{t} \left( \Psi_{\theta + h}(x  + \zeta_k ,f, t- s)  -  \Psi_{\theta} (x,f,t - s) - \Delta_{\zeta_k}  f( x) \right) e^{ -r(x ,\theta,h) s } d s \\
& =  \int_{0}^{t} \left( \Psi_{\theta + h}(x  + \zeta_k ,f, s)  -  \Psi_{\theta} (x,f,s) - \Delta_{\zeta_k}  f( x) \right) e^{ -r(x ,\theta,h) (t - s) } d s. 
\end{align*}
Suppose that for some $x \in \N^d_0$ we have $(\partial \lambda_k(x, \theta)/ \partial \theta )  > 0$. This implies that $\lambda_k(x,\theta+h) > \lambda_k(x,\theta)$ for $h$ close to $0$. 
Since $\gamma^h_i$ is exponentially distributed with rate $r(x,\theta,h)$ we obtain
\begin{align*}
& \frac{1}{h} \E \left(   \ind_{ E^h_{ik}} \E \left( D_f( \hat{X}_{\theta+h}, T \wedge \tau^h, T ) - D_f( \hat{X}_{\theta}, T \wedge \tau^h, T )  \middle \vert \mathcal{F}_{\tau^h \wedge T} \right)  
\middle \vert \hat{X}_\theta(\sigma^h_i) = x\right) \\
& = \frac{1}{h} \E \left(   p^h_{ik}(x)   r(x,\theta,h)\hat{R}^h_{\theta}(x,f, T - \sigma^h_i \wedge T  ,k) \right).
\end{align*}
Using \eqref{proof_prob2} one can see that
\begin{align*}
& \lim_{h \to 0}\frac{1}{h} \E \left(   \ind_{ E^h_{ik}} \E \left( D_f( \hat{X}_{\theta+h}, T \wedge \tau^h, T ) - D_f( \hat{X}_{\theta}, T \wedge \tau^h, T )  \middle \vert \mathcal{F}_{\tau^h \wedge T} \right)  \middle \vert \hat{X}_\theta(\sigma^h_i) = x\right)  \notag \\
& =\lim_{h \to 0} \E \left(    \frac{\partial \lambda_k(x, \theta)}{\partial \theta }     \hat{R}^h_{\theta}(x,f, T - \sigma^h_i \wedge T  ,k)  \right). 
\end{align*}
The above relation will also hold when $(\partial \lambda_k(x, \theta)/ \partial \theta )  < 0$. We can now conclude that
\begin{align*}
 \lim_{h \to 0} \frac{ \rho^h_{ik}(\theta)  }{h} = & \lim_{h \to 0} \E \left[  \ind_{\{ \tau^h > \sigma^h_i \} } \frac{\partial \lambda_k(\hat{X}_\theta(\sigma^h_i) , \theta)}{\partial \theta }  \hat{R}^h_{\theta}(\hat{X}_\theta(\sigma^h_i)   ,f,T - \sigma^h_i \wedge T ,k) \right].
\end{align*}
As $h \to 0$, the processes $\hat{X}_\theta$ and $\hat{X}_{\theta+h}$ converge almost surely to $X_\theta$. This implies that as $h \to 0$,
$\sigma^h_i \to \sigma_i$ and $r(\hat{X}_\theta(\sigma^h_i) ,\theta,h) \to \lambda_0(X_\theta(\sigma_i), \theta )$ almost surely, where $\sigma_i$ is the $i$-th jump time of the process $X_\theta$.
Also recall that $\tau^h \to \infty$ as $h \to 0$. Therefore using the continuous mapping theorem, one can see that
\begin{align}
\label{thm:senslimit1}
\lim_{h \to 0} \frac{\rho^h_{ik}(\theta)}{h} &= \E \left[ \frac{\partial \lambda_k(X_\theta(\sigma_i) , \theta)}{\partial \theta }   R_\theta( X_\theta (\sigma_i) ,f, T - \sigma_i  \wedge T ,k)
\right] ,
\end{align}
where the function $R_\theta$ is given by \eqref{defrktheta}.
Using \eqref{sens1:1},  \eqref{rhothetaexpansion} and \eqref{thm:senslimit1} we obtain
\begin{align*}
S_{\theta}(f,T) =  \sum_{k = 1}^K & \E \left(  \int_{0}^T  \frac{ \partial \lambda_k( X_{\theta }(t), \theta)}{\partial \theta} \Delta_{\zeta_k} f(X_{\theta}(t)) dt \right. \\& \left. +\sum_{i = 0 : \sigma_i < T}^{\infty} \frac{\partial \lambda_k(X_\theta(\sigma_i) , \theta)}{\partial \theta }   R_\theta( X_\theta (\sigma_i) ,f, T - \sigma_i ,k) \right). 
\end{align*}
This completes the proof of Theorem \ref{thm:sens} .
\end{proof}

\section{Conclusions and Future Work}
In this paper we present a new approach for obtaining unbiased estimates for the parameter sensitivities in a stochastic chemical reaction network. 
Our approach depends on a sensitivity formula that is derived using the random time change representation of Kurtz and the coupling introduced in \cite{DA}. The derived formula involves several quantities that cannot be directly computed from the trajectories of the reaction dynamics and hence they need to be estimated through simulations. We present a procedure called the \emph{Auxiliary Path Algorithm} (APA) that efficiently estimates all the required quantities using a fixed number of auxiliary paths. 
It relies on the fact that a Markov process representing chemical kinetics is such that its different paths visit the same states again and again. 
With APA we can generate samples that can be used for estimating the parameter sensitivity. 

Using the birth-death model (see Example \ref{example_bd2}) and the gene expression network (see Example \ref{example_ge}), we compare APA to the Girsanov method \cite{Gir}, which is the only other method known to produce unbiased estimates for the parameter sensitivity. Our results show that APA is considerably faster than the Girsanov method when the sensitivity is estimated with respect to a reaction rate constant which is \emph{small} in size. Such a rate constant would correspond to a reaction which is \emph{slow} in the reference time-scale of the system. Many biological networks have such \emph{slow} reactions and this makes our method useful for sensitivity analysis.
The main drawbacks of APA is that it is hard to implement and requires memory of a size proportional to the number of jumps in a typical path. 

Our method assumes that we can efficiently simulate the paths of the reaction network using Gillespie's \emph{Stochastic Simulation Algorithm} \cite{GP}. 
However this is not true when the chemical system has reactions that are taking place at vastly different time-scales (see \cite{ssSSA,weinan1,weinan2}). In such situations, our method will perform poorly but so will all the other methods for estimating the parameter sensitivity. Recently Kang and Kurtz \cite{HWKang} have developed a formal framework for reducing stochastic reaction models by exploiting the time-scale separation between the various reactions. These reduced models \emph{approximately} capture the dynamics of the original model and they can be efficiently simulated using a variant of the standard \emph{Stochastic Simulation Algorithm} (see \cite{ssSSA,weinan1,weinan2}). It is natural to ask if one can just simulate the reduced model and  \emph{approximately} estimate the parameter sensitivity for the original model. In an upcoming paper we will show that this can indeed be done under some conditions. That result will follow from ideas that are similar to the ones used in the proof of Theorem \ref{thm:sens}.

In some applications, one is interested in estimating the second order derivatives (the Hessian) with respect to the model parameters. One such application is optimization over the parameter space, where the Hessian is required to implement the Newton-Raphson scheme. Our approach in this paper can also yield exact expressions for the second order derivatives. One can then use such expressions to build unbiased estimators for the Hessian. We hope to do that in a future paper.

\bibliographystyle{abbrv}
\bibliography{references}

\end{document}